\numberwithin{equation}{section}
\title{Numerical proof of stability of viscous shock profiles}
\author{Blake Barker}
\address {Brown University, Providence, RI 02912}
\email{Blake\underline{ }Barker@brown.edu}
\thanks{Research of B.B. was partially supported
	under NSF grants no. DMS-1400872.} 
\author{Kevin Zumbrun}
\address{Indiana University, Bloomington, IN 47405}
\email{kzumbrun@indiana.edu} 
\thanks{Research of K.Z. was partially supported
	under NSF grant no. DMS-0300487.}
\begin{document}
	
	\begin{abstract}
We carry out the first rigorous numerical proof based on Evans function computations
of stability of viscous shock profiles, for the system of isentropic gas dynamics with monatomic equation of state.
We treat a selection of shock strengths ranging from the lower stability boundary of
Mach number $\approx 1.86 $, below which profiles are known by energy estimates to be stable,
to the upper stability boundary of $\approx 1669$, above which profiles are expected to be provable by rigorous asymptotic
analysis to be stable.
These results open the possibilities of: (i) automatic rigorous verification of stability or instability of individual shocks
of general systems, and (ii) rigorous proof of stability of all shocks of particular systems.
	\end{abstract}
	
	\date{\today}
	\maketitle
	
	\tableofcontents
	

\section{Introduction}\label{s:introduction}

In this paper, we carry out the first complete, rigorous numerical proof of stability of viscous shock profiles 
of a physically interesting system, demonstrating feasibility of a program proposed in \cite{Br2,ZH} for
the treatment of shock waves of large amplitude and or nonclassical type.
Such shock profiles, being both highly nonlinear and far from any convenient asymptotic regime, are typically described
only through numerical approximation.
Thus, the study of their stability by purely analytical means would appear to be a practical impossibility.
What was suggested in \cite{Br2,ZH} instead was a divide-and-conquer approach, wherein spectral stability or instability
would be determined by rigorous numerical ODE estimates, and the link between spectral and linearized and nonliner stability
determined by separate, purely analytical techniques based on pointwise estimates obtained by inverse Laplace transform
techniques.

The second, analytical part of this program has proceeded more quickly than the first, comprising
by now a mature and complete theory reducing the question of stability to a numerically well-conditioned
Evans function condition based on Wronskians of the linearized eigenvalue ODE.
However, the rigorous evaluation of this condition has lagged behind, in part due to absence of rigorous
computational infrastructure in general and in part to numerical difficulties of the shock wave systems in particular.
Here, we treat individual shock profiles of the equations of isentropic gas dynamics, 
the simplest physically interesting system.
Our hope is that the techniques introduced here will generalize to continuous families of shock profiles and more complicated
systems, fulfilling the vision outlined in 
\cite{Br2,ZH} of rigorous automatic treatment of the difficult problem of shock stability

\medskip




	In Lagrangian coordinates, the isentropic compressible Navier-Stokes equations in 
	1-D 
	are given by
	\eq{
		v_t-u_x&=0,\\
		u_t+p(v)_x&=(u_x/v)_x,
	}{\label{eq:pde}}
	where $u$ corresponds to velocity, $v$ to specific volume, and $p(v)$ is the pressure law, which we take to be adiabatic, $p(v) = a_0v^{-\gamma}$ \cite{Ba,Sm}.
	In physical modeling, generally $1\leq \gamma \leq 3$ is used \cite{HLZ,Se1,Se2,Sm}, where $\gamma = 5/3$ corresponds to a monatomic gas and $\gamma = 7/5$ to a diatomic gas.
	
	As is well known, these equations have viscous shock wave solutions 
	\eq{
		(v, u)(x,t) = (\bar v, \bar u)(x-st),\quad \lim_{x\to \pm \infty}(\bar v,\bar u) = (v_{\pm},u_{\pm}).	
	}{\notag}
	The 
	question of stability of these solutions has by now received 
	considerable
	attention.
	In 1985, Matsumura and Nishihara \cite{MN} showed that small-amplitude waves of \eqref{eq:pde} are stable when perturbed by zero-mass perturbations. Part of their work is equivalent to showing spectral stability. In \cite{MaZ3,Maz4,Z1,ZS,ZH}, the second author and collaborators showed that spectral stability implies asymptotic-orbital stability for a wide class of systems, including \eqref{eq:pde}, hence small-amplitude waves of \eqref{eq:pde} are asymptotically-orbitally stable. In 2007, addressing stability of large-amplitude waves of \eqref{eq:pde}, a bound on the potentially unstable point spectra of the linearized eigenvalue problem was derived via energy estimates in \cite{BHRZ}, and an extensive numerical Evans function study supplemented with evolution studies was carried out indicating 
	that 
	traveling waves of \eqref{eq:pde} are spectrally, hence nonlinearly, stable. Then in 2009, Humpherys, Lafitte, and the second author \cite{HLZ} showed by ODE estimates for all $\gamma \geq 1$, that in the limit the Mach number goes to infinity, traveling waves of \eqref{eq:pde} are spectrally, hence nonlinearly, stable, and they numerically demonstrated a lower bound on the Mach number for which the result holds when $1\leq \gamma \leq 3$. 
	
	The last piece in establishing stability of intermediate amplitude viscous shock wave solutions is to rigorously verify the numerical Evans function computations in \cite{BHRZ}. In general, automating rigorous verification of Evans function computations is the only fundamental open problem remaining in the program (pointwise semigroup stability and dynamics of waves) introduced by the second author and Howard in 1998. In this paper, we make a significant push in that direction. Indeed, we rigorously verify spectral stability, hence nonlinear stability, of representative viscous wave solutions of \eqref{eq:pde}.

	The Evans function $D(\lambda)$ 
	is a Wronskian for the eigenvalue ODE $W'=A(x,\lambda)W$, whose zeros correspond with eigenvalues of the linearized
	operator about the profile.  It is constructed so as to be analytic with respect to the spectral parameter $\lambda$.
	To rigorously verify spectral stability,
	therefore,
	we obtain an interval enclosure of the image of the Evans function under a contour that encloses any potentially unstable eigenvalues. We use interval arithmetic to account for machine truncation error, and analytic and computer assisted error bounds to track errors introduced by approximations and the numerical methods used. 
	Provided the relative error in the Evans approximation is strictly less than one
	everywhere along the contour, we may then conclude
	by Rouche's Theorem that the winding number of the numerically computed Evans function has winding number equal to
	that of the exact Evans function, deciding existence or nonexistence of unstable eigenvalues- hence spectral
	stability- by the Argument Principle.
	
	We use analytic interpolation of the stable/unstable eigenvalue of the
	limiting coefficient matrices $A(\pm \infty, \lambda)$ 
	to obtain a $\lambda$-varying analytic initializing basis 
	at $x=\pm \infty$
	for the ODE involved in the computation of the Evans function. A contraction mapping argument with error bounds then provides the initialization error. To bound the error of the numerical solution to the ODE, we obtain a posteriori error bounds on a numerically approximated fundamental solution. This strategy greatly reduces the challenging ``wrapping'' effect 
	(cf. \cite{R})
	involved in interval computations in the complex plane.
	
	\subsection{Main result}

	We now describe our main results.
	Making use of the (Galillean and other) invariances of \eqref{eq:pde}, we may by the change of coordinates
	\eqref{eq:rescaling} reduce to the case $(v_-,u_-)=(1,0)$, $u_+=1$, $s=-1$, leaving a one-parameter family of 
	shock profiles indexed by $1>v_+>0$. Here, $v_+\to 1$, converging to a constant solution, is the weak-shock, or
	small-amplitude limit corresponding to Mach number $1$, while $v_+\to 0$ is the strong-shock, or large-amplitude limit
	corresponding to Mach number $\infty$.

	We recall further \cite{Z1} that spectral stability (specified in Definition \ref{d:stable})
	has been shown to imply linear and nonlinear $L^1\cap H^3\to L^p\cap H^3$ asymptotic-orbital stability,
	$p>1$,
	in the sense that solutions with initial data sufficiently close in $L^1\cap H^3$ to the set $\mathcal{S}$
	of translates of profile $(\bar v, \bar u)$ remains close to $\mathcal{S}$ for all
	time in $L^1\cap H^3$ and converges time-asymptotically to $\mathcal{S}$ in $L^p\cap H^3$ for any $p>1$.


	\begin{theorem}\label{main}
For $\gamma = 5/3$ and
$v_+ \in \{10^{-4}, 10^{-3},10^{-2},0.1,0.2,0.3,0.4 \}$,
the viscous traveling wave solutions of \eqref{eq:pde} are spectrally stable, hence linearly and nonlinearly $L^1\cap H^3\to
L^p\cap H^3$ asymptotically-orbitally stable for any $p>1$.
	\end{theorem}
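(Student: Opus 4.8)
The plan is to reduce the theorem to seven rigorous winding-number computations, one for each listed value of $v_+$. By the reduction recalled above (\cite{Z1}; see also \cite{MaZ3,ZH}), spectral stability in the sense of Definition \ref{d:stable} already implies the asserted $L^1\cap H^3\to L^p\cap H^3$ asymptotic--orbital stability, so it suffices to verify spectral stability, i.e.\ that the Evans function $D(\lambda)$ has no zero in $\{\Re\lambda\ge 0\}$ other than the translational zero at $\lambda=0$, for each such $v_+$. I would first invoke the energy-estimate bound of \cite{BHRZ} on the potentially unstable point spectrum to confine any such zero to a fixed bounded region, then fix a contour $\Gamma$ --- a large semicircle in $\{\Re\lambda\ge 0\}$ closed by a segment of the imaginary axis, indented by a small semicircle about the origin --- and check, via that a priori bound together with the analytic low-frequency theory, that every candidate zero is strictly enclosed by $\Gamma$. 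It then remains to compute $\operatorname{wind}_\Gamma D$ rigorously and show it vanishes.

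The first concrete step would be to enclose the travelling wave rigorously. Using the travelling-wave form of \eqref{eq:pde} together with the Rankine--Hugoniot relations and the normalization $(v_-,u_-)=(1,0)$, $u_+=1$, $s=-1$, the profile equations collapse to a single monotone scalar first-order ODE for $\bar v$, with $\bar u$ recovered algebraically; from this I would obtain a rigorous interval enclosure of $(\bar v,\bar u)$ on a truncated interval $[-L_-,L_+]$, of the resulting coefficient matrix $A(x,\lambda)$ of the eigenvalue ODE $W'=A(x,\lambda)W$, and of the spectral gaps of the limiting matrices $A(\pm\infty,\lambda)$ that govern the decay rates of eigenfunctions.

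The core of the argument is a rigorous evaluation of $D(\lambda)$ along $\Gamma$ with fully tracked error, which I would split into three pieces. For \emph{initialization}, build an analytic-in-$\lambda$ basis of the unstable subspace of $A(-\infty,\lambda)$ and of the stable subspace of $A(+\infty,\lambda)$ by analytic interpolation of the relevant eigenvalues and eigenprojections over $\Gamma$, and then run a contraction-mapping argument on each half-line, following \cite{ZH}, to bound the gap between the initializing vector and the true decaying solution --- this simultaneously controls the domain-truncation error at $x=\pm L_\pm$. For \emph{integration}, integrate $W'=A(x,\lambda)W$ from the truncation points to a matching point $x=0$ with a numerical one-step scheme, bounding the error of the resulting approximate fundamental solution by an a posteriori residual (defect) estimate and letting interval arithmetic absorb machine truncation; propagating the whole fundamental matrix rather than individual interval vectors is the key to taming the wrapping effect (cf.\ \cite{R}). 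For \emph{assembly}, form $D(\lambda)$ as the Wronskian (a determinant) of the transported bases and combine all of the above contributions into one interval enclosure of $D(\lambda)$ valid for every $\lambda\in\Gamma$.

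Finally I would verify along $\Gamma$ that the relative error $|D_{\mathrm{approx}}(\lambda)-D(\lambda)|/|D_{\mathrm{approx}}(\lambda)|<1$; Rouch\'e's theorem then gives $\operatorname{wind}_\Gamma D=\operatorname{wind}_\Gamma D_{\mathrm{approx}}$, and the latter, read off from the sampled values of $D_{\mathrm{approx}}$ together with an interval bound on its variation between consecutive samples, should come out to be $0$. By the argument principle $D$ has no zero inside $\Gamma$, hence no unstable eigenvalue, which is spectral stability; carrying this out for each of the seven values of $v_+$ finishes the proof. The hard part will be making the accumulated rigorous error small enough that the final relative-error test actually passes: the long-interval ODE integration and, above all, the neighborhood of the small semicircle about $\lambda=0$ --- where $|D|$ is itself small and the stable/unstable subspaces of $A(\pm\infty,\lambda)$ nearly lose their analytic splitting --- will demand careful adaptive refinement of both the contour and the integration mesh, and a correspondingly delicate accounting of constants.
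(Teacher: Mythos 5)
Your overall architecture matches the paper's: reduce to spectral stability via the known analytical theory, confine any unstable spectrum to the ball of radius $R=(\sqrt{\gamma}+1/2)^2$ using the energy estimate of \cite{BHRZ}, build an analytic-in-$\lambda$ initialization at $x=\pm\infty$ controlled by a contraction-mapping bound, integrate the eigenvalue ODE with a posteriori (residual) bounds on an approximate fundamental solution in interval arithmetic, and conclude by a winding-number argument. (The paper's endgame is in fact simpler than your sampled-Rouch\'e step: Lemma \ref{lemma:spectral} produces an enclosure with $\Re D(\lambda)\ge c(v_+)>0$ along the whole contour, so the winding number vanishes outright.)

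The genuine gap is your treatment of $\lambda=0$. You work with the unintegrated eigenvalue problem \eqref{eq:eig_prob}, whose Evans function has the translational zero at the origin, and you indent the contour by a small semicircle there, appealing to ``the analytic low-frequency theory'' to guarantee that every candidate zero is enclosed. As written this leaves the portion of the open right half-plane inside the indentation unexamined: an eigenvalue with tiny positive real part would be invisible to your winding-number computation, and excluding it requires a quantitative, rigorous low-frequency expansion of $D$ near the origin (an explicit radius within which $\lambda=0$ is the only root), which you do not supply and which is delicate precisely where the stable/unstable eigenvalues of $A_\pm(\lambda)$ nearly collide --- the difficulty you yourself flag. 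In addition, with the unintegrated Evans function, passing from ``no nonzero unstable eigenvalues'' to nonlinear stability also requires verifying the correct behavior at the origin (simple zero/transversality) demanded by the pointwise semigroup theory. The paper sidesteps all of this by passing to the integrated coordinates \eqref{eq:eig_int} (Definition \ref{d:stable}): the translational zero is removed, the contour $S(\gamma)$ may run straight through $\lambda=0$, and the uniform lower bound $\Re D\ge c(v_+)$ applies on the entire contour. Either adopt the integrated formulation or add a rigorous quantitative low-frequency argument; without one of these your proof is incomplete at the origin.
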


\begin{remark}
	By spectral continuity- as follows for example by construction of the (complex-analytic) Evans function, 
	together with the fundamental property that zeros of the Evans function correspond to eigenvalues of the
	linearized operator- stability of profiles with the specific $v_+$ values of the theorem implies also stability
	of sufficiently nearby profiles.  
	However, a naive estimate by simply carrying along intervals in $v_+$ in the interval-arithmetic computations used
	to establish the theorem does not yield a computationally useful result.
	To establish stability for a reasonably sized family of $v_+$ appears rather 
	to require a further layer of interpolation, as for example in \cite{Barker2014}.  We leave this issue
	for the future.
\end{remark}

	\subsection{Discussion and open problems} 
	Rigorous numerical proof of stability has been carried out on bounded domains for Bunsen flame profiles of
	the Kuramoto--Sivashinsky equation in the pioneering work of Michelson in 1996 \cite{Michelson1996},
	by related, ``shooting-type'' techniques.
	Indeed, this was one of the motivations cited in \cite{Br2}.
	However, as discussed in \cite{Br2,BrZ}, the extension to the whole line brings new challenges, as do specific features of
	the shock wave/system case, and up until now numerical proof of stability 
	had not been carried out for any shock wave in the 
	system case.

	In this paper, we rigorously verify stability for several representative viscous shock profiles of \eqref{eq:pde}.
	However, it is still an open question whether stability holds for all parameter sets in the physically relevant regime. This would be an interesting direction to pursue as it would settle the question of stability of Isentropic Navier-Stokes shocks  once and for all.
	Extending numerical proof techniques to larger systems such as nonisentropic gas dynamics, MHD, elasticity,
	or combustion is another important next step: 

	More generally,
	as computations become more complex and delicate, it becomes less certain that numerical results are correct if not accompanied by rigorous error bounds. Simple convergence studies become less convincing and practical for large systems. 
	Thus, we expect that rigorous error control will play an increasingly important role in numerical analysis.
	In the context of stability,
	we plan on continuing the development of numerical proof techniques 
	for larger systems with the goal of incorporating 
	automated rigorous verification of spectral stability in STABLAB \cite{BHLZ}, 
	a general package for numerical stability analysis of traveling waves of all types; 
	see \cite{Barker2014} for initial steps in that direction.
		

	\section{Background}

	We look for traveling wave solutions of \eqref{eq:pde} of the form $u(x-st)$ where $s$ is wave speed, or alternatively we rescale $x\to x-st$ and look for stationary solutions of 
	\eq{
		v_t-sv_x-u_x&=0\\
		u_t-su_x+p(v)_x&=(u_x/v)_x.
	}{\label{eq:pde_wave}}
	As in \cite{BHRZ,HLZ}, we rescale 
	\eq{
		(x,t,v,u)\to(-\eps s x, \eps s^2 t, v/\epsilon, -u\eps s)
	}{\label{eq:rescaling}}
	with $\eps$ chosen so that $0<v_+<v_-=1$.
	This yields
	\eq{ 
		v_t+v_x-u_x&=0\\
		v_t+u_x+(av^{-\gamma})_x&=(u_x/v)_x.	
	}{\label{eq:pde_rescaled}} 
	Stationary solutions of \eqref{eq:pde_rescaled} satisfy
	\eq{
		v'-u'&=0\\
		u'+(av^{-\gamma})'&=(u'/v)',
	}{\label{eq:int}}
	or upon substitution,
	\eq{
		v'+(av^{-\gamma})'= (u'/v)'. 
	}{\label{eq:profile1}}
	 Integrating equation \ref{eq:profile1} from $-\infty$ to $x$ returns the profile equation
	 \eq{ 
	 	v' & = v(v-1+a(v^{-\gamma}-1)),
	 }{\label{eq:profile}}
	 where $a = v_+^{\gamma}(1-v_+)/(1-v_+^{\gamma})$ is determined from applying the Rankine-Hugoniot conditions to \eqref{eq:pde_rescaled}. Because \eqref{eq:profile} is scalar monotone decreasing ($v_+<v_- = 1$), there exists, as is commonly known, 
	 a solution $\bar v$ connecting $v_+$ to $v_- = 1$, with an associated $u$-profile $\bar u= \bar v-1$ obtained from
	 \eqref{eq:int}(i).
	
	\subsection{The Evans function}
	
	Linearizing equation \eqref{eq:pde_rescaled} about the profile solution $(\bar v(x)$, $\bar u(x))$ and looking for a solution via separation of variables leads to the eigenvalue problem,
	\eq{ 
		\lambda + v'-u'&= 0\\
		\lambda v + u'-\left(\frac{h(\bar v)}{\bar v^{\gamma+1}}v\right)'&= \left(\frac{u'}{\bar v}\right)',
	}{\label{eq:eig_prob}}
	where $' = \frac{d}{dx}$ and $h(\bar v) = -\bar v^{\gamma+1}+a(\gamma-1)+(a+1)\bar v^{\gamma}$. Making the change of coordinates $\tilde u(x) = \int_{-\infty}^x u(z)dz$, $\tilde v(x) = \int_{-\infty}^x v(z)dz$ in \eqref{eq:eig_prob}, dropping the tilde notation, and integrating, we arrive at the integrated coordinate system
	\eq{ 
		\lambda v+v'-u'&=0\\
		\lambda u + u' -\frac{h(\bar v)}{\bar v^{\gamma+1}}v'&= \frac{u''}{\bar v}.
	}{\label{eq:eig_int}} 
	In these new coordinates, the eigenvalue at zero corresponding to translational invariance has been removed, but otherwise the set of unstable eigenvalues of \eqref{eq:eig_prob} and \eqref{eq:eig_int} agree \cite{ZH,MaZ3,Maz4}.
	
	\begin{definition}\label{d:stable}
If \eqref{eq:eig_int} has no eigenvalues with non-negative real part, the underlying wave is said to be spectrally stable. 
	\end{definition}
	
	We formulate \eqref{eq:eig_int} as a first order ODE system
	\eq{ W'(x) = A(x,\lambda)W(x),\quad
		A(x,\lambda) = 
		\mat{0&\lambda&1\\0&0&1\\\lambda \bar v&\lambda \bar v&f(\bar v)-\lambda}, \quad W= 
		\mat{u\\v\\v'},\quad '=\frac{d}{dx},
	}{\label{eq:eig_ode}}
	where $f(\bar v) = \bar v- \bar v^{-\gamma} h(\bar v)$, and define $A_{\pm}(\lambda) = \lim_{x\to \pm \infty}A(x,\lambda)$. There is one eigenvalue of $A_-$ with positive real part and two eigenvalues of $A_+$ with negative real part. Asymptotically, the solutions of \eqref{eq:eig_ode} converge to the solutions of the ODEs $y_{\pm}'(x) = A_{\pm}(x,\lambda)y_{\pm}(x)$.  In order for $\lambda$ to be an eigenvalue of \eqref{eq:eig_int}, the solution $W^-(x)$ of \eqref{eq:eig_ode} that decays as $x\to -\infty$ must have nontrivial intersection with the two solutions, $W_1^+(x)$ and $W_2^+(x)$, of \eqref{eq:eig_ode} that decay as $x\to +\infty$. Thus, the Evans function is defined as $D(\lambda):= \det([W^-(0),W^+_1(0),W^+_2(0)])$. 
	
	Approximating $W_1^+(x)$ and $W_2^+(x)$ using \eqref{eq:eig_ode} leads to numerical error because competing growth modes degrade the linear independence of the solutions. As in \cite{BHRZ,Benzoni-Gavage2001,PW}, this numerical challenge is overcome by using the adjoint formulation for $x\geq 0$, $W'(x) = - A(x,\lambda)^*W(x)$, where $*$ indicates the complex transpose. Under this formulation, we solve for a single trajectory $\tilde W_+$ which decays exponentially as $x\to +\infty$. 
The Evans function may then be computed simply as $\tilde W^+(0)^* W_-(0)$, where $*$ denotes adjoint, or conjugate transpose.


	Following \cite{BHRZ}, we use the standard procedure of scaling out expected growth or decay of the ODE solution to improve numerical accuracy via the substitution $W(x) =:e^{\mu x}V(x)$, where $\mu$ is the eigenvalue of $A_-$ with positive real part, which leads to the ODE
	\eq{ V'(x) = \left(A(x,\lambda)-\mu I\right)V(x), \quad 
		\lim_{x\to - \infty} V(x,\lambda)= r_-(\lambda),
	}{\label{eq:eig_ode_scaled}}
	for $x\leq 0$, 
	where $r_-$ denotes the right eigenvector of $A_-$ associated with $\mu$,
	and similarly for the adjoint formulation when $x\geq 0$. The Evans function may then be defined
	equivalently as $D(\lambda):= \tilde V_+^*(0)V_-(0)$.

	Here and above, we have for compactness of notation supressed the dependence of solutions on $\lambda$;
	however, it is an important property of the Evans function is that this dependence may be taken to be analytic.
	Likewise, we have not specified the choice of eigenvector $r_-(\lambda)$.
	To make the Evans function analytic, the initial condition 
	$r_-(\lambda)$ at $x=-\infty$ is chosen by either obtaining an analytically varying eigenbasis by hand \cite{Br2}, 
	or by using the method of Kato \cite{Kato,BrZ}, which solves an analytic ODE to obtain a $\lambda$ analytically varying initializing basis of the appropriate unstable or stable subspace of $A_{\pm}$. The zeros of the Evans function correspond in location and multiplicity to the eigenvalues of \eqref{eq:eig_int}; see \cite{BrZ}. 
	
	
	Because the Evans function 
	is constructed to be
	analytic, winding number computations may be used to determine whether or not eigenvalues of \eqref{eq:eig_int} 
	exist within the interior of a 
	simple, positively oriented contour. In particular, using a bound $R$ on the modulus of any unstable eigenvalues of \eqref{eq:eig_int}, if they exist, derived in \cite{BHRZ}, one may establish that no unstable eigenvalues of \eqref{eq:eig_int} exist by showing the Evans function has winding number zero when computed on $\lambda \in S(\gamma):=  \partial(\{\Re(\lambda)\geq 0\} \cap {\partial B(0,R:=(\sqrt{\gamma}+1/2)^2)})$. 
	
	The goal of this paper is to rigorously verify for representative values of $v_+$ that the Evans function has winding number zero when computed on $S(\lambda)$, thus proving the underlying wave is spectrally stable. In \cite{MaZ3,Maz4,Z1}, it is shown that spectral stability implies asymptotic nonlinearly stability.
	
	\subsection{Interval arithmetic}
	
		To make our computations completely rigorous, we must account for machine truncation error. To accomplish this, we use interval arithmetic via the MATLAB package INTLAB \cite{R}, developed by Siegfried M. Rump, head of the Institute for Scientific Computing at the Hamburg University of Technology, Germany. With interval arithmetic, numbers are enclosed in an interval with machine representable boundaries, such as a rectangle or ellipse. We refer to an interval with machine epsilon width as a \textit{point interval.} When an operation is performed on intervals, such as addition, the resulting interval contains all numbers that can be realized from performing the operation on elements of the intervals on which the operation is performed. The rounding mode of the computer is changed as needed to accomplish this. Because changing the rounding mode is relatively time intensive, vectorization results in significant speedup of code; hence, we seek to vectorize whenever possible.

		\subsection{The wrapping effect}\label{s:wrapping}
		
			One challenge of computing with complex valued interval arithmetic is the wrapping effect. Rectangle or ellipse enclosures of complex valued intervals grow unnecessarily large in size under repeated operations because of the underlying two dimensional geometry in the complex plane. To keep an arbitrarily tight enclosure of the computed quantity, the interval shape must change dynamically. Figure \ref{fig_wrapping}  demonstrates this phenomena.
			
			There are a number of strategies we use to overcome the wrapping effect, such as track error separately as described in Section \ref{section:error_tracking}, evaluate Chebyshev interpolants using a Taylor expansion as explained in Section \ref{section:Chebyshev_interpolation_error_bounds}, and most notably, solve the Evans function ODE in a way that greatly reduces the wrapping effect as shown in Section \ref{section:Solving_the_Evans_function_ODE}.
		
		\begin{figure}[htbp]
			\begin{center}
				$
				\begin{array}{lcr}
					\includegraphics[scale=0.35]{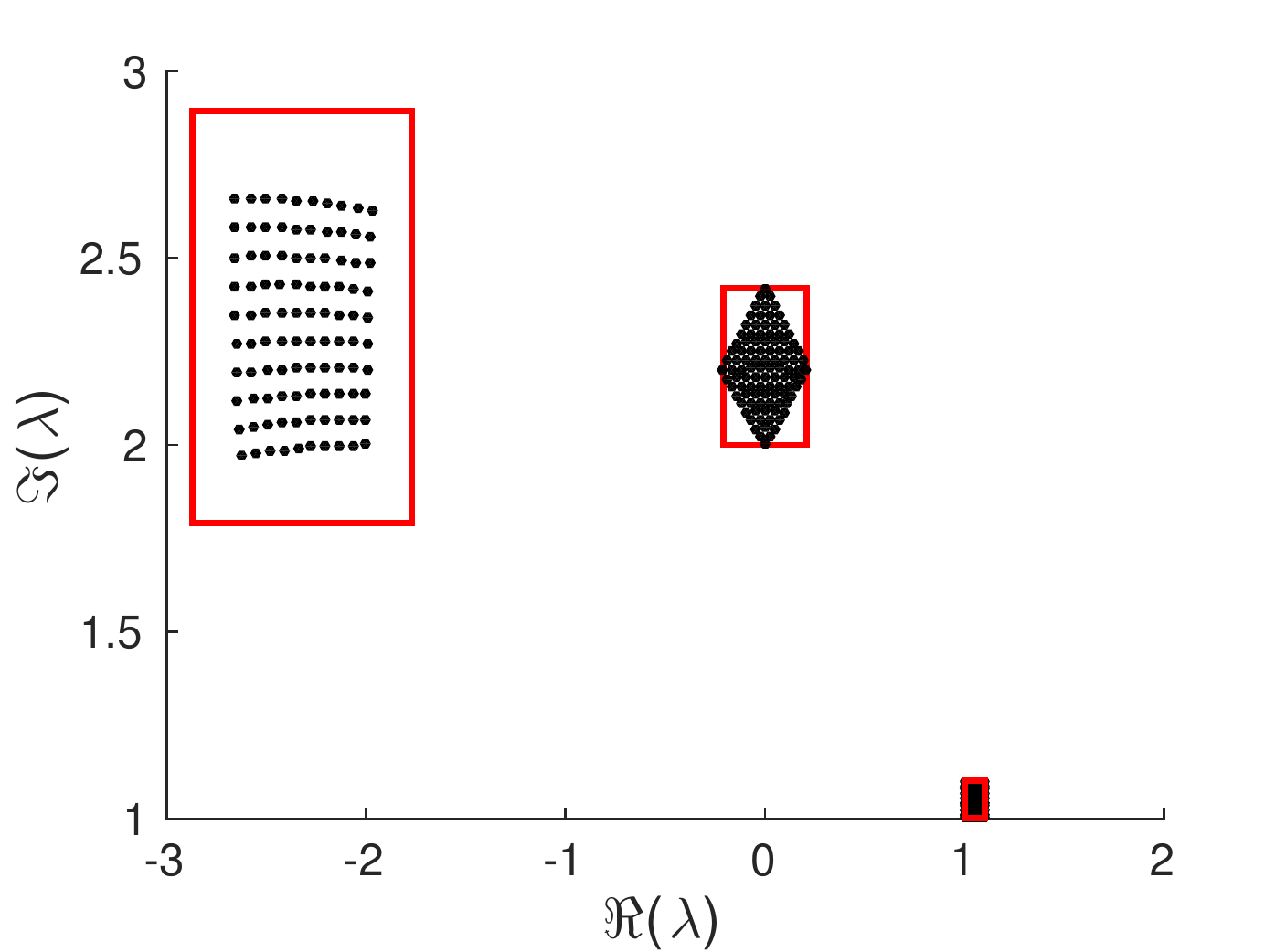}& \includegraphics[scale=0.35]{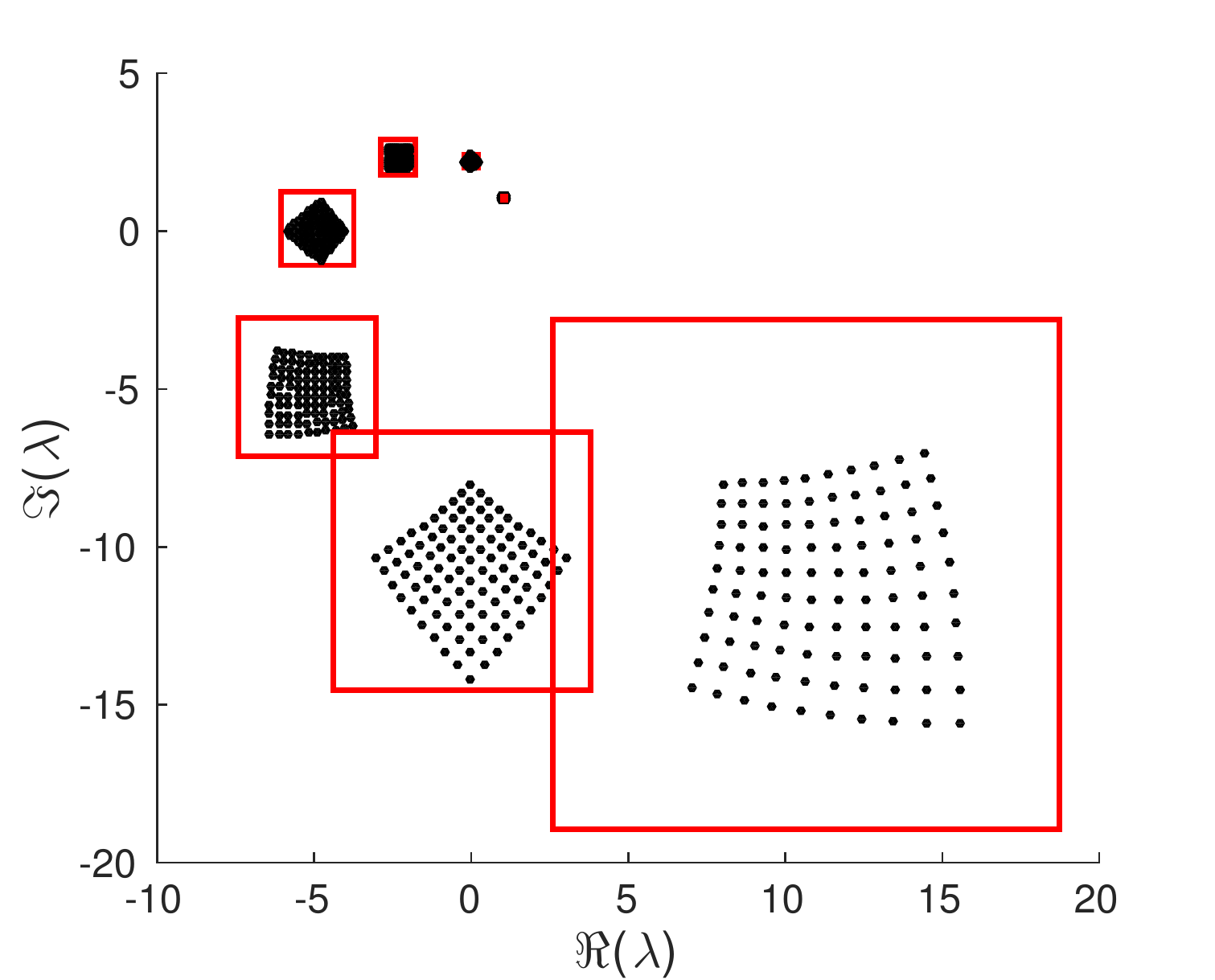}&\includegraphics[scale=0.35]{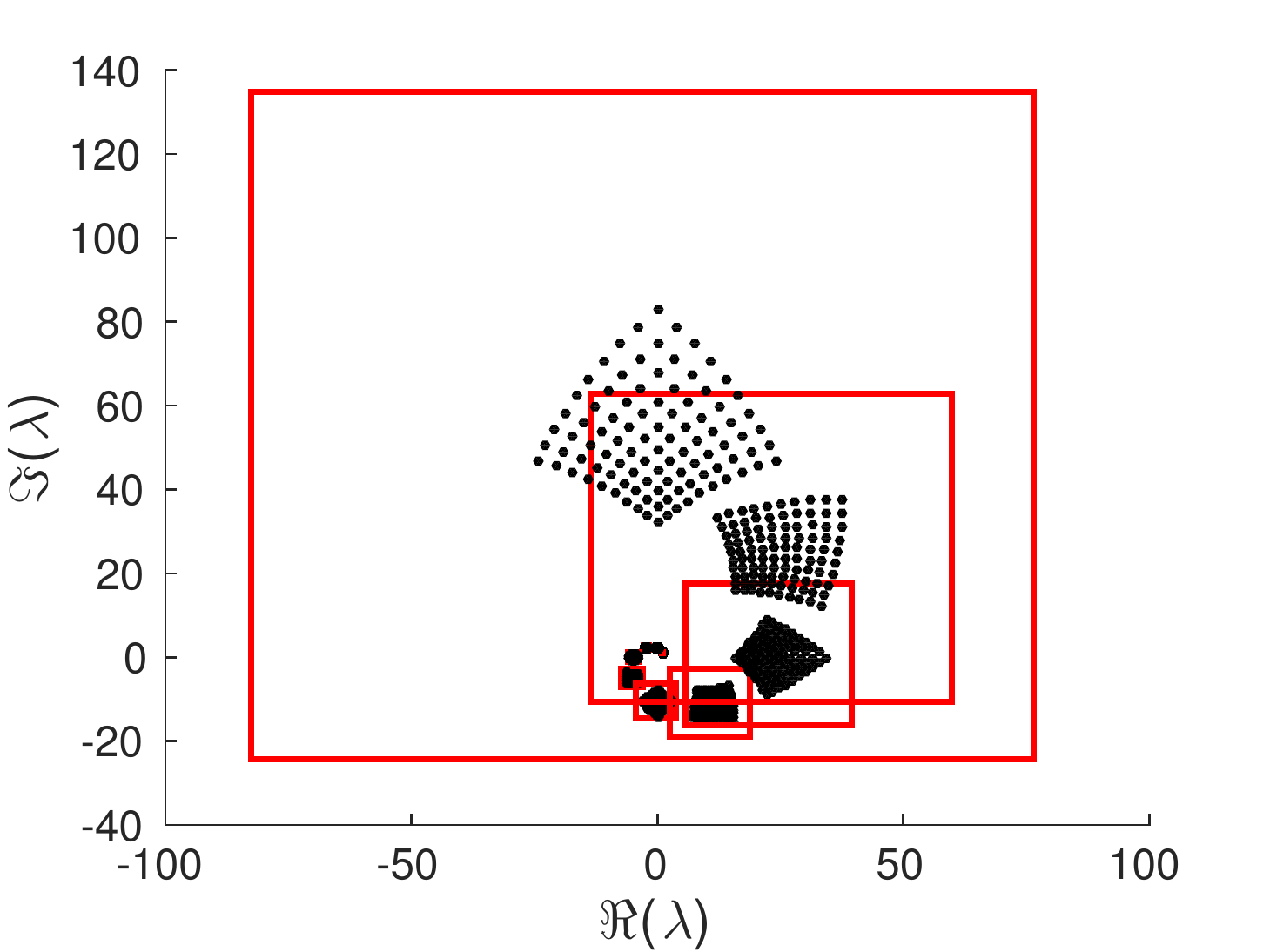}
				\end{array}
				$
			\end{center}
			\caption{Rectangular interval multiplied by itself 3, 7, and 10 times. Black dots provide a sample of a point in the original interval being raised to the appropriate power. Red boxes indicate a minimal interval enclosure when rectangular intervals are used.}
			\label{fig_wrapping}
		\end{figure}

		\subsection{Error tracking}\label{section:error_tracking}
		
		To reduce the wrapping effect, it is often advantageous to track error estimates separately and compute the overall error bound at the end. For example, if $A$, $B$, and $C$ are matrices with point interval entries and $A_{e}$, $B_{e}$, and $C_{e}$ are matrices with small width intervals representing error bounds, then rather than compute an enclosure of $D:=(A+A_e)(B+B_e)(C+C_e)$ following the order of operations indicated by the parenthesis, we compute $D\subset ABC + (A_eBC+AB_eC+ABC_e+AB_eC_e+A_eBC_e+A_eB_eC+A_eB_eC_e)$.
	
	\subsection{Chebyshev interpolation}

		Analytic Chebyshev interpolation plays an important role in our strategy for computing an enclosure of the solution of an ordinary differential equation.
	
		The Chebyshev polynomials of the first kind are defined recursively by 
		\eq{ 
			T_0(x) = 1,\ T_1(x) = x, \ \textrm{and}\  T_n(x) = 2xT_{n-1}(x)+T_{n-2}(x)\ \textrm{for} \ n>2,
		}{\notag}
		and have roots at $x_j = \cos((j+1/2)\pi/N)$, $j = 0, ..., N-1$. The Chebyshev polynomials satisfy the numerically advantageous condition $T_n(x) = \cos(n\theta)$ where $x = \cos(\theta)$.
	
		\subsubsection{Interpolating with Chebyshev polynomials}\label{section:all_things_Chebyshev}
		
			The coefficients $a_k$ of a Chebyshev interpolant, $p_N(x) = \sum_{j= 0}^{N-1} a_jT_j(x)$, that satisfies $y_j$ at the interpolation nodes $x_j$, the roots of $T_N(x)$, can be solved for using the property 
			\eq{
				\sum_{k=0}^{N-1} \cos(n\theta_k)\cos(m\theta_k) = \left\{\begin{split}N\quad if\ n=m=0\\N/2 \quad if \ n=m>0\\ 0\quad otherwise \end{split} \right.	
			}{}
			where $\theta_k = (k+1/2)\pi/N$. In particular, if $f(x)$ is the function to be interpolated, one notes that $\sum_{n=0}^{N-1}a_n\cos(n\theta_k) = f(x_k)$, so that $a_m$ can easily be solved for from $\sum_{n=0}^{N-1}a_n\sum_{k = 0}^{N-1}\cos(m\theta_k)\cos(n\theta_k) =\sum_{k=0}^{N-1}\cos(m\theta_k) f(x_k)$. That is, $a_m = \frac{2-\sign(m)}{N}\sum_{k=0}^{N-1}\cos(m\theta_k)f(x_k)$. Similarly, coefficients of two dimensional interpolation can be solved.
	
		\subsubsection{Chebyshev interpolation error bounds}\label{section:Chebyshev_interpolation_error_bounds}
				
			If $f(z)$ is an analytic function inside and on the stadium
			$
			E_{\rho}:= \left\{z\in \mathbb{C}| z = \frac{1}{2}\left(\rho e^{i\theta}+e^{-i\theta}/\rho\right), \theta \in [0,2\pi]\right\},
			$	
			where $\rho > 1$, and if $p_N(z)$ is a polynomial of degree $N-1$ that satisfies $f(x_j) = p_N(x_j)$ for $x_j = \cos((j+1/2)\pi/N)$, then the interpolation error for $x\in [-1,1]$ is given by Hermite's formula
 			\begin{equation}
	 			f(x)-p_N(x) = (2\pi i)^{-1}\int_{E_{\rho}} (W_{N+1}(x)f(z))/(W_{N+1}(z)(z-x))dz,
	 			\notag
 			\end{equation}
 			where 
			$
			W_{N+1}(z):= 
			(z-x_0)(z-x_1)...(z-x_n).
			$
			Error bounds are then given by 
			\begin{equation}
				\begin{split}
				\left|f(x)-p_N(x)\right|&\leq M_{\rho}L_{\rho}(\pi D_{\rho} \sinh(\eta (N+1)))^{-1}
				\end{split}
				\label{interp-bound2}
			\end{equation}
			where 
			\begin{equation}\label{interpstuff}
				\eta:= \log(\rho),\quad
				D_{\rho}:= \frac{1}{2}(\rho + \rho^{-1})-1, \quad
				L_p:= \pi \sqrt{\rho^2+\rho^{-2}},\quad
				M_{\rho}: = \max_{z\in E_{\rho}}(|f(z)|),
			\end{equation}
			where $|f(z)|\leq M_{\rho}$ for $z\in E_{\rho}$, $L_{\rho}$ is an upper bound on the length of $E_{\rho}$, and $D_{\rho}$ is a lower bound on the distance between $[-1,1]$ and $E_{\rho}$. 
			The bound
			$
			\sinh(\eta(N+1))\leq |W_{N+1}|\leq \cosh(\eta(N+1))
			$
			also holds. See \cite{BBCL,RW,TE} for details. Note that a crude bound $M_{\rho}$ suffices due to the exponential decay of error as the number of interpolation nodes $N$ increases.
		
			Now suppose that $L$ is the interpolant operator in two dimensions and $L_x$ and $L_y$ are the interpolant operators in one dimension in the variables $x$ and $y$. That is, if $f(x,y)$ is the function to be interpolated, $Lf$ returns a two dimensional polynomial with degree $N_x-1$ and $N_y-1$ in the variables $x$ and $y$ respectively such that $L(f(x_j,y_k)) = f(x_j,y_k)$ where $x_j$ and $y_k$ are the Chebyshev interpolation nodes described above. An upper bound on the operator norm of $L$ is given by the Lebesgue constant $\Lambda$, which, for the Chebyshev polynomials of the first kind, is given by $\Lambda_{N-1} = \frac{2}{\pi} \log(N)+\frac{2}{\pi}(\gamma+\log(8/\pi))+\alpha_N$ where $0<\alpha_N< \frac{\pi}{72 N ^2}$, where $\gamma = 0.5772...$ is Euler's constant; see \cite{BBCL},\cite{Gu}. Then a bound on the interpolation error of the two dimensional Chebyshev interpolant can be given in terms of the 1d interpolation error of each component on any slice of the two dimensional domain as given by
			\begin{equation}
			\begin{split}
			||f-Lf||&= ||f-L_x(L_yf)||
			= || f-L_xf + L_xf -L_x(L_yf)||\\
			&\leq || f-L_xf ||+|| L_xf -L_x(L_yf)||
			= || f-L_xf ||+|| L_x(f -L_yf)||\\
			&\leq || f-L_xf ||+ \Lambda_{N_x-1}||f -L_yf||.
			\end{split}
			\end{equation}

		\subsubsection{Evaluation of a Chebyshev interpolant}
	
			Clenshaw's algorithm is often  used to evaluate a Chebyshev interpolant because of its numerical accuracy and fast computation time. However, Clenshaw's algorithm is not suitable for interval arithmetic because typically each coefficient in the interpolant expansion will have at least machine epsilon width, which can result in an interval enclosure of the interpolant evaluation that grows like $2^N$ in width in Clenshaw's algorithm as the number of interpolation nodes $N$ increases. 
			
			Fortunately, a Chebyshev interpolant can be evaluated using the property that $T_n(x) = \cos(n\theta)$ where $x = \cos(\theta)$, in which case the width of the enclosure of the interpolant evaluation grows linearly instead of exponentially as $N$ increases. We further improve the enclosure of the interpolant evaluation by Taylor expanding the interpolant up to 5 terms in the variable $\theta$. 
				
\section{Numerical Proof}

	\subsection{Solving the profile ODE}

		\subsubsection{Taylor's method}
		
			We compute the profile solution, $\bar v(x)$ satisfying \eqref{eq:profile}, on intervals $[-L,0]$ and $[0,L]$ using a Taylor expansion with error bounds. We use interval arithmetic to compute the truncated Taylor expansion and the Taylor Remainder. We take care to reduce the wrapping effect. In particular, if $\bar v(x) \subset U_x$, we determine an interval enclosing $\bar v(x+h)$ by computing an interval enclosure $U_1$ containing $\bar v(x+h)$ when $\bar v(x)$ is initialized as the point interval of $\inf U_x$ and an interval enclosure $U_2$ containing $\bar v(x+h)$ when $\bar v(x)$ is initialized as the point interval of $\sup U_x$. By the comparison principle for one dimensional ODE, $\bar v(x)$ initialized at $v_0\subset U_x$ is contained in the interval  $U_{x+h}:=[\inf U_1,\sup U_2]$. 
			 
			The Taylor expansion of $\bar v(x)$ is $\bar v(x+h) = \sum_{k=0}^{n-1} \frac{h^k\bar v^{(k)}(x)}{k!}$ with remainder $R_T = h^{n}\bar v^{(n)}(x_*)/n!$ for some $x_*\in [x,x+h]$. The $n$th derivative, $\bar v^{(n)}(x)$, of $\bar v(x)$ is a function of $\bar v(x)$, $\bar v^{(n)}(x) = h_{n}(v(x))$, and $\bar v(x)\subset[v_+,1]$, so we obtain an interval enclosure $U_{n}$ of $\bar v^{(n)}(x)$ by evaluating $h_{n}(v)$ with interval arithmetic on subintervals $U_j$ of $[v_+,1]$ and then taking their union, $U_{n} = \bigcup U_j$. Then $R_T\subset U_R$ where $U_R := \{h^nu/n!:u\in U_{n}\}$. In our computations, we took $h = 1/8$, $n = 18$, and $L = 10$.
		
		\subsubsection{Interpolation error}
	
			Our algorithm for obtaining an interval enclosure of the Evans function requires a bound on the Chebyshev interpolant of the profile solution $\bar v(x)$ on intervals of the form $[a,b]\subset \R$. The standard interpolation error bound is given by $((b-a)/2)^N\sup(U_n)/(2^{N-1}N!)$ where $U_n$ is as described above. We similarly obtain an interpolation error bound on $f(v):=v-v^{-\gamma}(-v^{\gamma+1}+a(\gamma-1)+(a+1)v^{\gamma})$, which is needed as well. Vectorization of the derivatives of $\bar v(x)$ and $f(\bar v(x))$ is important in order to compute these bounds in reasonable time because the wrapping effect requires that these derivatives be computed on small subintervals of $[v_+,1]$ in order to obtain a useful bound.
				
	\subsection{Solving the Evans function ODE} 		\label{section:Solving_the_Evans_function_ODE}
	
		In this section we describe our method for obtaining an interval enclosure of the solution of the ODE used to construct the Evans function. Consider the ODE
		\eq{ 
		W'(x;\lambda) = \tilde A(x;\lambda)W(x;\lambda),\quad W(\pm L;\lambda) = W_{\pm,\lambda},\quad x\in[\pm L,0].
		}{}
		Take $[a,b]\subset [\pm L,0]$ and let $T(x,\lambda)$ be invertible and satisfy $T(a,\lambda) = I$. Define $V$ by $W = TV$. Then 
		\eq{ 
			V'&= T^{-1}(\tilde AT-T')V=:DV,\quad \tilde A =A-\mu I.
		}{} 
		Suppose $|D|\leq \eps$. Then by Proposition \ref{thm:ode_bound}, $|V(x)|\leq |V(a)|e^{\eps x}=|W(a)|e^{\eps x}$, and so
		\eq{ 
			|V(x,\lambda)-V(a,\lambda)|&\leq \left|\int_a^x D(y)V(y)dy\right|\\
			&\leq \sign(x-a)\int_a^x \eps |W(a)| e^{\eps y}dy\\
			&\leq e^{\eps a}|W(a)| |e^{\eps(b-a)}-1|.
		}{}
		
		To choose $T(x,\lambda)$, $x\in [a,b]$, we take the entries of $T$ to be Chebyshev polynomials of degree N. We then form a sparse matrix $M$ with block entries of the form $T_c'(x_j,\lambda_k)-\tilde A(x_j,\lambda_k)T_c(x_j,\lambda_k)$ where $'= \frac{\partial}{\partial x}$ and $T_c$ are matrices whose entries are the Chebyshev polynomials evaluated at the nodes $x_j$. The three eigenvectors of $M^HM$ corresponding to the three smallest modulus eigenvalues provides the Chebyshev coefficients of an approximate basis for the solution space of the ODE, which polynomial approximate basis $B(x,\lambda)$ is used to form the transformation matrix $T(x,\lambda)= B(x,\lambda)B^{-1}(a,\lambda)$ in our scheme. 
		
		To find the bound $\eps$ on $D:= T^{-1}(\tilde AT-T')$, we approximate with error bounds $A(x,\lambda(\theta))$ with Chebyshev interpolation on $[a,b]\times [-1,1]$. On the imaginary axis, $\lambda(\theta)$ takes the form $\lambda(\theta) = i[(\lambda_1+\lambda_2)/2+(\lambda_1-\lambda_2)\theta/20$ for $\lambda_1>\lambda_2\geq 0$. The contour along the imaginary axis must be broken up into several pieces in order to obtain good interpolation bounds because of the small modulus eigenvalue for the adjoint problem which requires a small stadium when interpolating the decay eigenvalue at $x=+\infty$. On the half circle, $\lambda(\theta) =  Re^{i(\pi/4+\pi\theta/4)}$. We use Chebyshev interpolation to interpolate $T'$. Because the polynomials that form the entries of $T$ are of degree $N$, and we interpolate the entries of $T'$ with degree $N$, there is no interpolation error. Finally, we must approximate $T^{-1}$. To do this, we interpolate the determinant of $T$ and the Adjugate of $T$ with Chebyshev polynomials of degree $3N$, so that there is no interpolation error. Finally, we obtain a single matrix $\tilde D$ approximating $\textrm{adjugate}(T)(\tilde AT-T')$ whose entries are Chebyshev polynomials of degree $4N_1+M_1$ by $4N_2+M_2$ where $M_1$ and $M_2$ are the degree of the polynomials that approximate $\tilde A(x,\lambda)$, so that once again there is no interpolation error. We can sum the absolute value of the coefficients of the entries of $\tilde D$ to obtain an upper bound on $\tilde D$. We then use small interval steps in $\lambda$ to compute a lower bound on the modulus of $\det(T)$. Combining these yields $\eps$ such that $|D|\leq \eps$.  
		
		When we interpolate $\tilde A(x,\lambda) = (A(x,\lambda)-\mu(\lambda)I)$, we actually just use a Chebyshev polynomial $\check \mu(\lambda)$ to represent $\mu(\lambda)$. We must correct for the difference in $\mu(\lambda)$ and $\check \mu(\lambda)$. This would not be necessary if we were computing on a single contour, as it would only change the Evans function by a small, $\lambda$-varying analytic non-zero factor. However, because we must compute the contour in parts, we correct by interpolating $\mu(\lambda)$ with error bounds, call the interpolant $\tilde \mu(\lambda)$, and then multiplying the solution basis at $x=0$ by $e^{(\tilde \mu(\lambda)-\check \mu(\lambda))L_{\pm}}\approx 1$. We note that it is important for the coefficients of $\check \mu(\lambda)$ to be point intervals in order for the algorithm to provide small error intervals. Otherwise, we would use the interval Chebyshev interpolant of $\mu(\lambda)$.

	\subsection{ODE initialization error}
	
		In this section we bound the initialization error that comes from approximating the ODE solution $W(x)$ of \eqref{eq:eig_ode}  at $x = M_{\pm}$. 

		\begin{lemma}\label{lemma:laplace_bound}
			For $\gamma = 5/3$, $v_+\in \{10^{-4},10^{-3},10^{-2},0.1,0.2,0.3,0.4\}$, $|\cdot|$ the Euclidean ($l^2$) operator norm, and
			$\lambda \in S(\gamma):=  \partial(\{\Re(\lambda)\geq 0\} \cap {\partial B(0,(\sqrt{\gamma}+1/2)^2)})$, the following bounds hold,
			\eq{
				\left|e^{(A_--\mu_- I)x}\right| \leq C_1^- e^{\hat \eta^- x},\ x \geq 0, \quad\textrm{and}\quad \left|e^{(-A_+^*-\mu_+ I)x}\right| \leq C_1^+ e^{\hat \eta^+ x},\ x \leq 0,
			}{}
			where $\mu_-= \mu_-(\lambda)$ is the eigenvalue of $A_-$ with positive real part, $\mu_+=\mu_+(\lambda)$ is the eigenvalue of $-A_+^*$ with negative real part, $\tilde \eta_- = 0.1$, $\tilde \eta_+ = 0.25$, $C_1^- = 5.41$, and $C_1^+ = 8.76$.
		\end{lemma}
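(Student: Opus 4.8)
The plan is to deduce the two exponential bounds from explicit spectral information about the constant $3\times 3$ matrices $A_\pm(\lambda)$, handling uniformity over the compact contour $S(\gamma)$ and over the finitely many values of $v_+$ by a short analysis near the single degenerate parameter $\lambda=0$ together with a verified interval computation on the complement.

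\textbf{Reduction to spectral data.} First I would record that the entries of $A_\pm(\lambda)=\lim_{x\to\pm\infty}A(x,\lambda)$ are rational in $\lambda$ and in the constant endstate $\bar v|_{\pm\infty}\in\{1,v_+\}$ (through $f$ and $a=a(v_+)$), so $A_\pm$ depend analytically on $\lambda$ along $S(\gamma)$. By the consistent-splitting structure recalled above (cf.\ \cite{BHRZ,HLZ}), for every $\lambda\in S(\gamma)$ the matrix $A_-$ has a unique eigenvalue $\mu_-(\lambda)$ with $\Re\mu_->0$ and two further eigenvalues of real part $\le 0$, while $-A_+^*$ has a unique eigenvalue $\mu_+(\lambda)$ with $\Re\mu_+<0$. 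Hence $0$ is an eigenvalue of each of $B_-:=A_--\mu_-I$ and $B_+:=-A_+^*-\mu_+I$, the remaining two eigenvalues of $B_-$ (resp.\ of $B_+$) having real part $\le-\Re\mu_-<0$ (resp.\ $\ge-\Re\mu_+>0$). The only loss of strict separation occurs at $\lambda=0$, where $A_\pm(0)$ is a rank-one matrix; I would check directly from \eqref{eq:eig_ode} that its double eigenvalue $0$ is \emph{semisimple}.

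\textbf{From spectra to the exponential bounds.} Next I would write $e^{B_-x}=e^{-\mu_-x}e^{A_-x}$ and bound $e^{A_-x}$ through a unitary Schur factorization $A_-=U_-\big(\operatorname{diag}(\mu_-,\nu_1,\nu_2)+N_-\big)U_-^*$ with $N_-$ strictly upper triangular; since $\Re\mu_-=\max\Re\sigma(A_-)$, the elementary estimate for exponentials of triangular matrices gives $|e^{A_-x}|\le P(|N_-|\,x)\,e^{(\Re\mu_-)x}$ for $x\ge 0$ with $P$ a fixed quadratic, hence $|e^{B_-x}|\le P(|N_-|\,x)$. Because the nonzero eigenvalues of $B_-$ have real part $\le-\Re\mu_-\le-\delta_0$, where $\delta_0:=\inf_{\lambda\in S(\gamma),\,v_+}\Re\mu_-(\lambda)>0$, this polynomial factor is in fact dominated, so $|e^{B_-x}|\le C_1^-$ for all $x\ge 0$, a fortiori $|e^{B_-x}|\le C_1^-e^{\hat\eta^-x}$. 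The bound for $B_+$ on $x\le 0$ follows in the same way from $e^{B_+x}=e^{-\mu_+x}e^{-A_+^*x}$, using $\Re\mu_+=\min\Re\sigma(-A_+^*)$; here the polynomial factor is controlled with the help of the semisimple confluence above, which forces the nilpotent part of the Schur form of $-A_+^*$ to be $O(|\lambda|)$ near $\lambda=0$, the residual growth being absorbed into the factor $e^{\hat\eta^+x}$. One takes $C_1^\pm:=\sup_{\lambda\in S(\gamma),\,v_+}$ of the resulting conditioning constant.

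\textbf{Rigorous bookkeeping and the main obstacle.} The supremum above, the strict positivity of $\delta_0$, and the inequalities $\max\Re\sigma(B_-)\le\hat\eta^-$ and $\max\Re\sigma(-B_+)\le\hat\eta^+$ would then be certified by covering $S(\gamma)$ with finitely many $\lambda$-intervals and, on each, enclosing with INTLAB the characteristic roots of $A_\pm$ (hence $\mu_\pm$, identified by the sign of their real part), a verified Schur factor $U_\pm$ with the induced bound on $|N_\pm|$, and the norms entering the constant; the variation of all these quantities within and between intervals is controlled by Lipschitz bounds from the explicit $\lambda$-dependence, and the margins $\hat\eta^-=0.1$, $\hat\eta^+=0.25$ give robustness both to rounding and to the near-confluence at $\lambda=0$. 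Taking suprema then produces $C_1^-=5.41$ and $C_1^+=8.76$. I expect the genuine obstacle to be exactly this uniformity near $\lambda=0$, where the diagonalization of $A_\pm$ degenerates and, for $B_+$, the two small eigenvalues both collapse to $0$; the route through it is (i) to use the always perfectly conditioned unitary Schur form rather than an eigenvector basis, and (ii) to exploit semisimplicity of the confluent eigenvalue, which makes the accompanying nilpotent block vanish at $\lambda=0$ and be $O(|\lambda|)$ nearby, so a direct perturbative estimate closes the uniform bound on a fixed neighborhood of $\lambda=0$ while the verified computation handles its complement. The same analysis also secures $\delta_0>0$---which holds because $\mu_-(0)=f(1)=1-a\gamma>0$ for $\gamma=5/3$ and the listed $v_+$ (indeed $a<1/\gamma$ whenever $0<v_+<1$)---and this positivity is precisely what makes the polynomial factors of the previous step harmless.
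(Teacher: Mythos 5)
Your proposal takes a genuinely different route from the paper, and as written it has real gaps. The paper does not use spectral gaps, Schur forms, or any special analysis at the eigenvalue collision $\lambda=0$: it writes $e^{(A_--\mu_-I)x}$ as the inverse-Laplace (resolvent) integral $\frac{1}{2\pi i}\oint_\Gamma e^{xz}\bigl(z-(A_--\mu_-I)\bigr)^{-1}dz$ over a fixed rectangle whose right edge sits at $\Re z=\hat\eta^-$, so the factor $e^{\hat\eta^- x}$ comes for free from the contour location, and the constant $C_1^\pm$ is obtained directly as an interval-arithmetic Riemann-sum enclosure of $\frac{1}{2\pi}\oint_\Gamma\bigl|\bigl(z-(A_--\mu_-I)\bigr)^{-1}\bigr|\,|dz|$ (resolvent via adjugate/determinant, $\mu_\pm(\lambda)$ enclosed by Chebyshev interpolation with error bounds, uniformly over $S(\gamma)$). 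The near-confluence at $\lambda=0$ never needs to be treated separately, because it only enters through the resolvent norm on the fixed rectangle, which is simply computed.

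The concrete gaps in your scheme are these. First, the step from $|e^{B_-x}|\le P(|N_-|x)$ to $|e^{B_-x}|\le C_1^-$ is a non sequitur: the unitary-Schur/triangular estimate with spectral abscissa $0$ yields only polynomial growth, and ``the polynomial factor is dominated'' is exactly the quantitative content you have not supplied. To repair it you would need a block-Schur splitting isolating the zero eigenvalue from the block with abscissa $\le-\delta_0$ and an explicit bound on the off-diagonal coupling (e.g.\ through $\int_0^\infty e^{B_{22}s}\,ds$), i.e.\ a genuinely different estimate than the one you wrote down. Second, even repaired, nothing in your plan ties the certified supremum to the specific numbers $5.41$ and $8.76$; since the lemma \emph{is} the certification of those numbers, ``take the sup of the resulting conditioning constant'' proves the statement only if the (typically lossy) Schur-based bound happens to land below them, which is not argued and is doubtful. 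Third, on the $+$ side the phrase ``residual growth absorbed into the factor $e^{\hat\eta^+x}$'' is backwards: for $x\le 0$ and $\hat\eta^+>0$ that factor decays as $x\to-\infty$ and can absorb nothing; indeed, since $0\in\sigma(-A_+^*-\mu_+I)$, one has $|e^{(-A_+^*-\mu_+I)x}|\ge 1$ for all $x$, so a bound decaying as $x\to-\infty$ is impossible, and the contour construction in the paper in fact produces growth of the type $e^{-\hat\eta^+x}$ in that direction (the printed sign in the statement is best read accordingly); your argument inherits this confusion rather than resolving it. The peripheral observations (analyticity of $A_\pm$ in $\lambda$, semisimplicity of the double zero of $A_\pm(0)$, $\mu_-(0)=1-a\gamma>0$) are correct but do not close these gaps.
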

		
		\begin{proof}[Computer assisted proof]
			Following \cite{BHRZ}, we represent the matrix exponential using the Laplace transform 
			\eq{
				e^{(A_--\mu_- I)x}&=\frac{1}{2\pi i}\oint_{\Gamma} e^{xz}(z-(A_--\mu_- I))^{-1}dz.
			}{\label{eq:laplace}}
			We take $\Gamma$ to be the rectangular contour with vertices at $(-R,R)$, $(-R,-R)$, $(\hat \eta^-,R)$, $(\hat \eta^-,-R)$ where $R$ is a bound on the modulus of the eigenvalues of $(A_--\mu_- I)$ given by Rouche's theorem, $R:=1+\max(|c_2/c_3|,|c_1/c_3|,|c_0/c_3|)$, where the characteristic polynomial of $A_-$ is given by $p(z) = c_3 z^3 + c_2z^2 + c_1 z + c_0$. 
			We note that 
			\eq{
				\left|e^{(A_--\mu_- I)x}\right|&\leq \frac{e^{x\hat \eta_-}}{2\pi}\sum_{k=1}^4\int_0^1 \left|(\gamma_k(t)-(A_--\mu_- I))^{-1}\right||\gamma_k'(t)|dt,
			}{\label{eq:laplace_bound}}
			where each $\gamma_k(t):[0,1]\to \C$ parameterizes a different side of the rectangle $\Gamma$. Using the adjugate and determinant of $A_-$ to compute $A_-^{-1}$, we obtain an interval enclosure of the RHS of \eqref{eq:laplace_bound} by computing each integral with a Reimann sum with interval arithmetic using 1000 evenly spaced subintervals of [0,1]. To obtain $\mu_-$ on each subinterval, we first interpolate $\mu_-(\lambda)$ with a Chebyshev interpolant with error bounds and then evaluate the resulting polynomial on the subintervals, all the while using interval arithmetic. The rigorous computation indicates that 
			\eq{
				|e^{(A_--\mu_- I)x}|&\leq C^{-}e^{\hat \eta^{-} x},
			}{\notag}   
			where $C^-\in [0,5.407325642691972]$, hence the statement of the lemma holds for the stated value of $C_1^-$. Similarly,  we establish that $C_1^+:= 8.76$ satisfies the statement of the lemma. 
			
		\end{proof}
		
		\begin{remark}
			In \cite{BHRZ}, analytic bounds on the matrix exponential are given, but to improve the error estimates, we use rigorous computation here.
		\end{remark}
		
		\begin{lemma}\label{lemma:Adiff_error}
			For $\gamma = 5/3$, $v_+\in \{10^{-4},10^{-3},10^{-2},0.1,0.2,0.3,0.4\}$, $|\cdot|$ the Euclidean ($l^2$) operator norm, and
			$\lambda \in S(\gamma):=  \partial(\{\Re(\lambda)\geq 0\} \cap {\partial B(0,(\sqrt{\gamma}+1/2)^2)})$, the following bounds hold, 
			\begin{equation}
				||A(x,\lambda)-A_-(\lambda)||_2 \leq C_{-}(v_+)e^{\eta_{-}(v_+)x},\ x \leq -10,\quad 
					||A(x,\lambda)-A_+(\lambda)||_2 \leq C_{+}(v_+)e^{\eta_{+}(v_+)x},\ x \geq 10, 
			\end{equation}
			where $C_{\pm}(v_+)$ and $\eta_{\pm}(v_+)$ are given in Table \ref{tb:table1}.
		\end{lemma}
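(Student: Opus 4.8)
The statement is a computer-assisted one, the entries of Table~\ref{tb:table1} being produced by interval arithmetic; the analytic content is a reduction of the matrix-norm bound to a scalar decay estimate for the profile $\bar v$, plus an explicit comparison (Gronwall) argument for the latter. I would proceed in two analytic steps and one computational step.

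\emph{Algebraic reduction.} From \eqref{eq:eig_ode}, the matrix $A(x,\lambda)-A_\pm(\lambda)$ has all entries zero except its third row, which equals $\big(\lambda(\bar v(x)-v_\pm),\ \lambda(\bar v(x)-v_\pm),\ f(\bar v(x))-f(v_\pm)\big)$ (with $v_-=1$). Being the outer product of the third standard basis vector with that row vector, its Euclidean operator norm is exactly $\sqrt{2|\lambda|^2|\bar v(x)-v_\pm|^2+|f(\bar v(x))-f(v_\pm)|^2}$. On $S(\gamma)$ one has $|\lambda|\le R=(\sqrt\gamma+1/2)^2$, and $f$ is Lipschitz on the relevant range of $\bar v$ with constant controlled rigorously from $f'(v)=2+a\gamma(\gamma-1)v^{-\gamma-1}$ (which is monotone, so the sup is attained at an endpoint). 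Hence, with $L_-:=\sup_{[\bar v(-10),1]}|f'|$, $L_+:=\sup_{[v_+,\bar v(10)]}|f'|$, and $K_\pm:=\sqrt{2R^2+L_\pm^2}$,
\[
\|A(x,\lambda)-A_\pm(\lambda)\|_2\ \le\ K_\pm\,\big|\bar v(x)-v_\pm\big| ,
\]
and it remains to bound $|\bar v(x)-v_\pm|$ exponentially.

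\emph{Scalar decay estimate.} Write \eqref{eq:profile} as $\bar v'=F(\bar v)$, $F(v)=v\big(v-1+a(v^{-\gamma}-1)\big)$, and note that $v=1$ and $v=v_+$ are the only two zeros of the convex function $v\mapsto v-1+a(v^{-\gamma}-1)$, so that $F<0$ on $(v_+,1)$. Factor $F(v)=(v-1)G(v)=(v-v_+)\widehat G(v)$ with $G(v)=v\big(1+a\,(v^{-\gamma}-1)/(v-1)\big)$ and $\widehat G(v)=v\big(1+a\,(v^{-\gamma}-v_+^{-\gamma})/(v-v_+)\big)$; the apparent singularities at $v=1$ and $v=v_+$ are removed by the mean value theorem, which also supplies the enclosure $(v^{-\gamma}-c^{-\gamma})/(v-c)\in[-\gamma\max(v,c)^{-\gamma-1},\,-\gamma\min(v,c)^{-\gamma-1}]$ needed for rigorous evaluation. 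Since $\bar v$ is monotone decreasing, $\bar v(y)\in[\bar v(-10),1)$ for $y\le-10$ and $\bar v(y)\in(v_+,\bar v(10)]$ for $y\ge10$, and the endpoint values $\bar v(\mp10)$ come with rigorous error bars from the Taylor computation of $\bar v$ of the previous subsection. Interval arithmetic over these two ranges then yields $\eta_-=\eta_-(v_+)>0$ (close to $F'(1)=1-a\gamma$) with $G\ge\eta_-$, and $\eta_+=\eta_+(v_+)<0$ (close to $F'(v_+)$) with $\widehat G\le\eta_+$. Setting $w=\bar v-1<0$ and $z=\bar v-v_+>0$, one has $w'=G(\bar v)w\le\eta_- w$ and $z'=\widehat G(\bar v)z\le\eta_+ z$, so $e^{-\eta_- x}w$ is nonincreasing on $x\le-10$ and $e^{-\eta_+ x}z$ is nonincreasing on $x\ge10$; comparison with the values at $x=\mp10$ gives $|\bar v(x)-1|\le e^{10\eta_-}|\bar v(-10)-1|\,e^{\eta_- x}$ and $|\bar v(x)-v_+|\le e^{-10\eta_+}(\bar v(10)-v_+)\,e^{\eta_+ x}$. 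Combining with the first step yields the claimed bounds with these $\eta_\pm$ and $C_-:=K_- e^{10\eta_-}|\bar v(-10)-1|$, $C_+:=K_+ e^{-10\eta_+}(\bar v(10)-v_+)$; evaluating these expressions in interval arithmetic for each listed $v_+$ fills in Table~\ref{tb:table1}.

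\emph{Main obstacle.} The delicate part is the scalar step. A naive interval evaluation of $F(v)/(v-v_\pm)$ near the relevant endpoint produces a $0/0$ form and hence a useless enclosure, so the explicit factorization together with the mean-value enclosure of the difference quotients is essential; and because $\eta_\pm$ and $C_\pm$ trade off through the factors $e^{\mp10\eta_\pm}|\bar v(\mp10)-v_\pm|$, one must take $\eta_\pm$ as close to the linearized rates as the interval enclosure of $G$ (resp.\ $\widehat G$) over all of $[\bar v(-10),1]$ (resp.\ $[v_+,\bar v(10)]$) permits, subdividing those intervals as needed. The one genuinely large and unavoidable constant is $L_+=2+a\gamma(\gamma-1)v_+^{-\gamma-1}$ for very small $v_+$, which merely reflects the stiffness of the problem in the strong-shock limit.
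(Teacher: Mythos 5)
Your proposal is correct and follows essentially the same route as the paper: a first-order Taylor/mean-value reduction giving $\|A(x,\lambda)-A_\pm\|_2\le|\bar v(x)-v_\pm|\sqrt{2R^2+(f'(\tilde v))^2}$, followed by factoring $(v-v_\pm)$ out of the profile right-hand side, bounding the remaining factor by interval arithmetic over $[\bar v(-10),1]$ resp.\ $[v_+,\bar v(10)]$, and concluding by the scalar ODE comparison principle. The only cosmetic difference is that the paper removes the $0/0$ form in the factored quotients via the inequality $1\le(1-x^\gamma)/(1-x)\le\gamma$ from \cite{BHRZ} rather than your mean-value enclosure, which serves the same purpose.
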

		
		\begin{proof}[Computer assisted proof]
			Taylor expanding $f(v)$ about $v_{\pm}$ to first order and computing the Frobenius norm yields $||A(x,\lambda)-A_{\pm}||_2\leq |\bar v(x)-v_{\pm}|\sqrt{2R^2+(f'(\tilde v))^2}$, where $\tilde v \in [v_+,\bar v(10)]$ or $\tilde v\in [\bar v(-10),1]$ respectively and $R:= (\sqrt{\gamma}+1/2)^2$ is the radius of the semi-circle on which we compute the Evans function. Using interval arithmetic, we compute an upper bound on $\sqrt{2R^2+(f'(\tilde v))^2}$. We use interval arithmetic to compute an enclosure, for $v \in [v_+,\bar v(10)]$ or $ v\in [\bar v(-10),1]$,  of the following quantities derived in \cite{BHRZ},
			\begin{equation}
				\frac{H(v,v_+)}{v-1}= v\left(1-\left(\frac{v_+}{v} \right)^{\gamma}\left(\frac{1-v^{\gamma}}{1-v}\right)\right),\quad H(v,v_+) = (v-v_+)\left(v-\left(\frac{1-v_+}{1-v_+^{\gamma}} \right)\left(\frac{1-\left(\frac{v_+}{v} \right)^{\gamma}} {1-\left(\frac{v_+}{v} \right)}  \right)\right).
			\end{equation}
			Note that in the above, we use the inequality $1\leq (1-x^{\gamma})/(1-x)\leq \gamma$ proved in \cite{BHRZ}, valid for $0\leq x<1$, $\gamma \geq 1$.
			As in \cite{BHRZ}, we then use the comparison principle for first order ODE to obtain the bound $|\bar v(x)-v_{\pm}|\leq |\bar v(\pm 10)-v_{\pm}|e^{\eta_{\pm}x}$. Combining these rigorously computed bounds and enclosures yields the stated bounds of the lemma. 
		\end{proof}

				\begin{table}[!b]
					\begin{tabular}{|c||c|c|c|c|c|c|c|}
						\hline
						$v_+$ & 1e-4 & 1e-3 & 1e-2 & 0.1 & 0.2 & 0.3 & 0.4 \\
						\hline
						\hline
						 $\eta_-(v_+)$ & 0.9995& 0.9994 & 0.9987 & 0.9969 & 0.9020 & 0.8185 & 0.7221 \\
						\hline
						$C_2^-(v_+)$  & 2.472e-3& 2.445e-3 & 2.244e-3 & 2.676e-4 & 4.480e-4 & 9.016e-4 & 2.039e-3 \\
						\hline 
						$\theta_-(v_+)$ & 6.783e-7& 6.709e-7& 6.206e-7 & 1.056e-7 & 3.655e-7 & 1.891e-6 & 1.296e-5 \\
						\hline 
						\hline
						$\eta_+(v_+)$  & -0.9996& -0.9979 & -0.9803 & -0.8197 & -0.6586 & -0.5085 & -0.3662 \\
						\hline
						$C_2^+(v_+)$ & 3.294e-3& 7.356e-5 & 2.277e-6 & 1.759e-5 & 4.969e-5 & 1.867e-4 & 7.160e-4 \\
						\hline
						$\theta_+(v_+)$ & 1.051e-6& 2.392e-8 & 8.946e-10 & 3.963e-8 & 6.599e-7 & 1.332e-5 & 2.611e-4 \\
						\hline
					\end{tabular}
					\caption{Table showing the values of $C_{\pm}(v_{\pm})$ and $\eta_{\pm}(v_{\pm})$ in Lemma \ref{lemma:Adiff_error}. }
					\label{tb:table1}
				\end{table}

		\begin{lemma}\label{lemma:init_error}
For $\gamma = 5/3$, $v_+\in \{10^{-4},10^{-3},10^{-2},0.1,0.2,0.3,0.4\}$, $|\cdot|$ the Euclidean ($l^2$) operator norm, and 
$\lambda \in S(\gamma):=  \partial(\{\Re(\lambda)\geq 0\} \cap { \overline{B(0,(\sqrt{\gamma}+1/2)^2)}})$, 
there hold for $|x|\geq 10$
			\eq{
					\left| V^{\pm}(x)-V_{\pm}\right| & \leq \theta_{\pm}|V_{\pm}|,
			}{}
			where $\theta_{\pm}$ is given in Table \ref{tb:table1}, $V_-'(x) = (A(x,\lambda)-\mu_- I)V_-(x)$ for $x\leq 0$, $V_+'(x) = (-A^*(x,\lambda)-\mu_+ I)V_+(x)$ for $x\geq 0$, $V(x)\to V_{\pm}$ as $x\to \pm \infty$, and $V_-$ and $V_+$ are respectively the eigenvectors of $A_-$ and $-A^*_+$ corresponding to the eigenvalue $\mu_-$ of $A_-$ with positive real part and to the eigenvalue $\mu_+$ of $-A^*_+$ with negative real part.
		\end{lemma}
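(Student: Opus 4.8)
The plan is to set up the standard fixed-point / variation-of-constants argument on the half-line, using the exponential bounds on the limiting semigroups from Lemma~\ref{lemma:laplace_bound} together with the convergence rates of the coefficient matrix from Lemma~\ref{lemma:Adiff_error}. I treat the $-\infty$ case in detail; the $+\infty$ case is identical up to replacing $A_-$ by $-A_+^*$ and reversing the orientation of $x$. Writing $\tilde A_-(x,\lambda):=A(x,\lambda)-\mu_- I$ and $B_-(\lambda):=A_-(\lambda)-\mu_- I$, the solution $V_-$ normalized so that $V_-(x)\to V_-$ (the eigenvector) as $x\to-\infty$ satisfies the integral equation
\begin{equation}
V_-(x) = V_- + \int_{-\infty}^{x} e^{B_-(\lambda)(x-y)}\bigl(A(y,\lambda)-A_-(\lambda)\bigr)V_-(y)\,dy,
\notag
\end{equation}
which is well posed because $e^{B_-(\lambda)z}$ decays like $C_1^- e^{\hat\eta^- z}$ for $z\ge 0$ by Lemma~\ref{lemma:laplace_bound}, while $\|A(y,\lambda)-A_-(\lambda)\|_2\le C_-(v_+)e^{\eta_-(v_+)y}$ is integrable at $-\infty$ by Lemma~\ref{lemma:Adiff_error}.

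Next I would run the contraction estimate. On the space of continuous functions $V$ on $(-\infty,-10]$ with $\|V\|:=\sup_{x\le -10}|V(x)|$ finite, the map $\mathcal{T}V(x):=V_-+\int_{-\infty}^x e^{B_-(x-y)}(A(y)-A_-)V(y)\,dy$ satisfies
\begin{equation}
|\mathcal{T}V(x)-\mathcal{T}W(x)| \le C_1^- C_-(v_+)\,\|V-W\|\int_{-\infty}^{x} e^{\hat\eta^-(x-y)}e^{\eta_-(v_+)y}\,dy
= \frac{C_1^- C_-(v_+)}{\eta_-(v_+)-\hat\eta^-}\,e^{\eta_-(v_+)x}\,\|V-W\|,
\notag
\end{equation}
using $\eta_-(v_+)>\hat\eta^-=0.1$ for each tabulated $v_+$ (which must be checked against Table~\ref{tb:table1}, where $\eta_-\ge 0.72$). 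Evaluating the prefactor at $x=-10$ gives a Lipschitz constant $<1$ for the parameter values in question, so $\mathcal{T}$ is a contraction and the fixed point exists and is unique; applying the same bound to $\mathcal{T}V-\mathcal{T}(V_-)$ (i.e. $W\equiv V_-$ constant) and using $|V(y)|\le 2|V_-|$ on the ball of radius $|V_-|$ around the constant, I get
\begin{equation}
|V_-(x)-V_-| \le \frac{2C_1^- C_-(v_+)\,|V_-|}{\eta_-(v_+)-\hat\eta^-}\,e^{\eta_-(v_+)x}, \qquad x\le -10,
\notag
\end{equation}
and the claimed bound $|V^-(x)-V_-|\le\theta_-|V_-|$ follows by setting $\theta_-(v_+):=2C_1^-C_-(v_+)e^{\eta_-(v_+)(-10)}/(\eta_-(v_+)-\hat\eta^-)$ (up to the precise bookkeeping of factors of $2$, which one reconciles with the rigorously computed entries of Table~\ref{tb:table1}).

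The main obstacle is not the analytic structure of the argument, which is routine, but making every constant in it rigorous and consistent: one must verify, using interval arithmetic and the already-established Lemmas~\ref{lemma:laplace_bound} and~\ref{lemma:Adiff_error}, that the resulting numerical value of the Lipschitz/size constant at $x=-10$ is genuinely below $1$ and genuinely below the tabulated $\theta_\pm$, for \emph{all} $\lambda\in S(\gamma)$ simultaneously and for each of the seven values of $v_+$. This requires knowing that $\hat\eta^-=0.1$ and $\hat\eta^+=0.25$ were chosen strictly smaller than $\eta_-(v_+)$ and $|\eta_+(v_+)|$ respectively (true by Table~\ref{tb:table1}), and that the $\lambda$-dependence of $\mu_\pm(\lambda)$ and of the eigenvectors $V_\pm$ is handled by the same Chebyshev-interpolation-with-error-bounds machinery used elsewhere in the paper, so that the supremum over the contour is controlled. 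A secondary point to be careful about is the normalization: the fixed-point solution is automatically the one selected by decay to the eigenvector at $\mp\infty$, which is exactly the $V^\pm$ appearing in the Evans function construction, so no additional matching is needed.
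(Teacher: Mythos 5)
Your proposal follows essentially the same route as the paper: both set up the Duhamel/variation-of-constants integral equation at $\pm\infty$, run a contraction-mapping argument with the constants supplied by Lemmas~\ref{lemma:laplace_bound} and~\ref{lemma:Adiff_error}, and evaluate the resulting constant rigorously (interval arithmetic) at $|x|=10$ over the contour. The only difference is bookkeeping: the paper uses the standard Banach fixed-point a posteriori estimate to get $\theta_\pm=q_\pm/(1-q_\pm)$ with $q_\pm=C_1^\pm C_2^\pm e^{\eta_\pm M_\pm}/(\eta_\pm-\hat\eta^\pm)$, whereas your invariant-ball version gives roughly $2q_\pm$, slightly cruder but harmless at these parameter values.
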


		\begin{proof}[Computer assisted proof]
			
		Define the operator $N$ on $L^2$ by
		\eq{ 
			N(U)(x):= V_- +\int_{-\infty}^x e^{(A_--\mu_-I)(x-y)}(A(y)-A_-)U(y)dy.
		}{}
		 Note that if $V'(x) = (A(x,\lambda)-\mu_-I)V(x)$, $\lim_{x\to -\infty} V(x) = V_-$, then $N(V(x)) = V(x)$ as can be seen by applying Duhamel's Principle to 
		\eq{
			V'(x) = (A(x,\lambda)-A_-)V(x) + (A_--\mu_-I)V(x).
		}{}
		Note that for $x\in (-\infty,M_-]$ and $C_1^-$, $\hat \eta_-$ as in Lemma \ref{lemma:laplace_bound} and $C_2^-$ and $\eta_-$ as in \ref{lemma:Adiff_error},
		\eq{
			|N(U_1)(x)-N(U_2)(x)|&\leq \int_{- \infty}^x |e^{(A_--\mu_-I)(x-y)}(A(y)-A_-)||U_1(y)-U_2(y)|dy\\
			& \leq \sup_{y\in (-\infty,M_-]}|U_1(y)-U_2(y)| \int_{- \infty}^{x} C^-_1C^-_2e^{-\hat \eta_-(x-y)}e^{\eta_- y}dy\\
			&\leq \sup_{y\in (-\infty,M_-]}|U_1(y)-U_2(y)|C^-_1C^-_2 \frac{e^{\eta_- x}}{\eta_--\hat \eta_-}\\
			&\leq  C ^-_1C^-_2 \frac{e^{\eta_- M_-}}{\eta_--\hat \eta_-} \sup_{y\in (-\infty,M_-]}|U_1(y)-U_2(y)|.
		}{\label{eq:error_bound}}
		Define $q_- = C_1^-C_2^-\frac{e^{\eta_- M_-}}{\eta_--\hat \eta_-}$ and note that if $U_1 = V_-$ and $U_2 = N(U_1)$, then by the calculation in \eqref{eq:error_bound}, $ \sup_{y\in (-\infty,M_-]}|U_1(y)-U_2(y)|\leq q_-|V_-|$  where $q_-:= C_1C_2e^{\eta M_-}/(\eta - \hat \eta)$, so that if $q_-< 1$, by the Banach fixed point theorem, $|U_*(x)-V_-|_{\sup x\in(-\infty,M_-]}\leq \frac{q}{1-q}|V_-|$ where $U_*=N(U_*)$ is the ODE solution of interest. We define $\theta_- = \frac{q_-}{1-q_-}$ and use interval arithmetic to compute $\theta_{-}$, as reported in Table \ref{tb:table1}, completing the computer assisted proof. Similarly, we establish the contraction constant $\theta_+$.
		
		\end{proof}

		We are now ready to state the main lemma.
		 
		\begin{lemma}\label{lemma:spectral}
			For $\gamma = 5/3$, $v_+\in \{10^{-4},10^{-3},10^{-2},0.1,0.2,0.3,0.4\}$, and for $\lambda \in S(\gamma):=  \partial(\{\Re(\lambda)\geq 0\} \cap {\partial B(0,(\sqrt{\gamma}+1/2)^2)})$, we have that $\Re(D(\lambda))\geq c(v_+)$ where $c(v_+)$ is given in Table \ref{tb:table2}.
		\end{lemma}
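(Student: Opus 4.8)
The plan is to reduce the claim $\Re(D(\lambda))\ge c(v_+)$ on $S(\gamma)$ to a finite, rigorously verifiable computation by combining the a~posteriori bounds already assembled above with a covering of the contour $S(\gamma)$ by finitely many $\lambda$-subintervals. First I would split the contour into its two natural pieces---the segment along the imaginary axis $\{\Re\lambda=0,\ |\lambda|\le R\}$ and the semicircular arc $\{|\lambda|=R,\ \Re\lambda\ge 0\}$ with $R=(\sqrt{\gamma}+1/2)^2$---and further subdivide each piece into the sub-arcs dictated by the Chebyshev-interpolation stadiums used in Section~\ref{section:Solving_the_Evans_function_ODE} (the imaginary axis in particular must be broken up near the origin because of the small-modulus eigenvalue of the adjoint problem). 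On each $\lambda$-subinterval I would produce an interval enclosure of $D(\lambda)=\tilde V_+^*(0)V_-(0)$ by chaining together: (i) the initialization enclosures $|V^{\pm}(M_\pm)-V_\pm|\le \theta_\pm|V_\pm|$ from Lemma~\ref{lemma:init_error}, together with an interval enclosure of the analytically chosen eigenvectors $V_\pm=r_\pm(\lambda)$ (via interval enclosure of the adjugate/determinant of $A_\pm(\lambda)-\mu_\pm I$); (ii) the ODE-propagation enclosure from $x=\pm L$ to $x=0$, namely $|V(x,\lambda)-V(a,\lambda)|\le e^{\eps a}|W(a)||e^{\eps(b-a)}-1|$ applied on each Chebyshev subinterval $[a,b]$ with the transformation matrix $T$ and the bound $|D|\le\eps$ constructed there, composed across subintervals; and (iii) the correction factor $e^{(\tilde\mu(\lambda)-\check\mu(\lambda))L_\pm}$ accounting for the polynomial approximation $\check\mu$ of $\mu$. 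Multiplying these enclosures (tracking error separately as in Section~\ref{section:error_tracking} to control the wrapping effect, and evaluating all Chebyshev interpolants via the $\theta$-variable Taylor-expansion method of Section~\ref{section:Chebyshev_interpolation_error_bounds} rather than Clenshaw) yields a rectangular interval enclosing $D(\lambda)$ for all $\lambda$ in the subinterval; taking real parts gives an interval lower bound for $\Re(D(\lambda))$ there.

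The final step is then purely arithmetic: take the minimum, over all $\lambda$-subintervals covering $S(\gamma)$ and for each of the seven values of $v_+$, of the lower endpoint of the enclosure of $\Re(D(\lambda))$; call this $c(v_+)$, record it in Table~\ref{tb:table2}, and verify it is strictly positive (which the interval computation reports). Because the subintervals cover $S(\gamma)$ and $D$ is continuous, this establishes $\Re(D(\lambda))\ge c(v_+)$ everywhere on the contour. (As a safeguard one should also check that the $\lambda$-subinterval widths are small enough that the enclosures remain tight---in practice this forces the subdivision to be finer near the origin and near the corners of $S(\gamma)$.)

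The main obstacle is step (ii): obtaining an enclosure of the ODE solution on $[\pm L,0]$ that is tight enough that $\Re(D(\lambda))$ is bounded away from zero uniformly along the whole contour. A naive interval integration suffers catastrophically from the wrapping effect and from competing exponential growth/decay modes. The paper's device---scaling out $\mu$, using the adjoint formulation on $x\ge 0$ so only a single decaying trajectory is propagated, and above all replacing direct integration by the change of variables $W=TV$ with $T$ a Chebyshev-polynomial approximate fundamental solution so that the residual $D=T^{-1}(\tilde AT-T')$ is tiny---is precisely what makes $\eps$ (hence $|e^{\eps(b-a)}-1|$) small; but one must verify on each subinterval that $\det T$ is bounded away from $0$ (done here by stepping in $\lambda$), that the interpolation of $\tilde A$, $T'$, $\det T$, and $\operatorname{adjugate}(T)$ is genuinely exact in the stated degrees, and that the accumulated product of per-subinterval enclosures does not blow up. Secondary difficulties are the small-modulus adjoint eigenvalue near $\lambda=0$, which forces a delicate choice of stadium parameter $\rho$ balancing $M_\rho$ against $\sinh(\eta(N+1))$ in \eqref{interp-bound2}, and ensuring the coefficients of $\check\mu(\lambda)$ are point intervals so the correction factor is $\approx 1$ with negligible width.
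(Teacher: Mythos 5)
Your proposal matches the paper's computer-assisted proof in all essentials: enclose the initializing eigenvectors using Lemmas \ref{lemma:laplace_bound}--\ref{lemma:init_error} with Chebyshev interpolation of $\mu_\pm(\lambda)$, propagate to $x=0$ via the transformation-matrix ODE enclosure of Section \ref{section:Solving_the_Evans_function_ODE} on a contour subdivided finely along the imaginary axis (because of the near-colliding eigenvalues forcing a small interpolation stadium), form the product $\tilde V_+^*(0)V_-(0)$ with the $e^{(\tilde\mu-\check\mu)L_\pm}$ correction, and take the infimum of the real parts of the resulting enclosures to define $c(v_+)$. The only cosmetic difference is that the paper treats the semicircular arc in a single step (accepting a large but still right-half-plane enclosure), whereas you propose subdividing it as well; this does not change the argument.
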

		
		\begin{proof}[Computer assisted proof]

				We establish the lemma by rigorously verifying that the Evans function, $D(\lambda)$, when computed on $S(\lambda)$ is enclosed in intervals whose union has real part no smaller than the values reported in Table \ref{tb:table2}.
			
				We use the error bounds described in 
Lemmas \ref{lemma:laplace_bound}-\ref{lemma:init_error} to obtain an enclosure of the initializing vectors to be used in solving \eqref{eq:eig_ode}. In particular, we use as initial conditions the $\lambda$ analytically varying eigenvectors,
				\begin{equation}
					V_-(\lambda) = (\lambda+\mu_-, \mu_-,\mu_-^2)^T,\quad V_+(\lambda) = v_+^{-1}\overline{\lambda^{-2}}(-\overline{\lambda}v_+\mu_+,-\overline{\lambda}v_+\mu_++\overline{\lambda^2}v_+, \mu_+^2)^T,
				\end{equation}
				where $\mu_-$ is the eigenvalue of $A_-$ with positive real part and $\mu_+$ is the eigenvalue of $-A_+$ with negative real part. To find an enclosure of $\mu_{\pm}(\lambda)$, we use Chebyshev interpolation of $\mu_{\pm}(\lambda)$ where we determine $\mu_{\pm}$ at specific $\lambda$ points using an interval Newton solver. We note that $\mu_+ = O(\lambda)$ as $\lambda \to 0$, and so $V_+(\lambda)$ can be smoothly continued to $\lambda =0$. Because the Chebyshev interpolation nodes do not correspond to $\lambda = 0$, we do not need to compute the Evans function at $\lambda = 0$.
				
				In order for the enclosure of the Evans function to be sufficiently tight to provide useful information, we divide the contour $S(\lambda)$ up into smaller pieces and compute an enclosure of the Evans function on each of those pieces. We compute the Evans function on the half-circle in one step because we are able to do so without the interval enclosure of the Evans function including the origin. We divide the part of $S(\lambda)$ on the imaginary axis up into 39-74 pieces. The particular challenge along the imaginary axis is that the initializing basis loses analyticity at values of $\lambda$ with small ($10^{-3}$) negative real part because of colliding eigenvalues of the limiting matrix, and so the stadium of the analytic interpolation must have a small radius which results in a slowly decaying interpolation error bound. Consequently, smaller steps must be taken to reduce the number of interpolation nodes needed in each computation. 
				
				Using the interval method described in Section \ref{section:Solving_the_Evans_function_ODE} for solving ODE and the Chebyshev interpolation method described in Section \ref{section:all_things_Chebyshev}, we obtain an enclosure of the solution to the ODE evaluated at $x = 0$ for each of the subintervals of $S(\lambda)$ on which we compute the Evans function. 
						
				 Evaluating the Chebyshev interpolant of the ODE solutions with the method described in Section \ref{section:all_things_Chebyshev},  we obtain an interval enclosure of the ODE solutions which we then use to compute the Evans function. We take the infimum of the real part of all intervals enclosing $D(\lambda)$ yielding the result stated in the Lemma.

		\end{proof}
		
		 Figures demonstrating the interval enclosures of the Evans function are given in Figure \ref{figofevans}. Note that the enclosure of the Evans function computed on the semi-circular part of $S(\lambda)$ results in a large interval, which, nonetheless, lies to the right of $\lambda = 0$. One could break the circular part of the contour up into smaller pieces ot obtain a tighter enclosure of the Evans function at the cost of computation time.

	\begin{table}[!b]
		\begin{tabular}{|c||c|c|c|c|c|c|c|}
			\hline
			$v_+$ & 1e-4 & 1e-3 & 1e-2 & 0.1 & 0.2 & 0.3 & 0.4 \\
			\hline
			\hline
-			c($v_+$)&  7.65e-3 & 1.01e-2& 8.34e-3 & 1.72 & 2.12 & 2.46 & 2.75\\
			\hline
		\end{tabular}
		\caption{Table describing $c(v_+)$ given in Lemma \ref{tb:table2}.}
		\label{tb:table2}
	\end{table}

	Theorem \ref{main} is an immediate consequence of Lemma \ref{lemma:spectral} and the nonlinear stability theorems of \cite{MaZ3,Maz4,Z1,ZS,ZH}.
	
	\begin{proof}[Proof of Theorem \ref{main}]
		In \cite{BHRZ}, it is shown that any unstable eigenvalues of \eqref{eq:eig_int}, if any exist, must have modulus no larger than $R = (\sqrt{\gamma}+1/2)^2$. Lemma \ref{lemma:spectral} shows that the winding number of the Evans function computed on the contour given by $\lambda \in S(\gamma):=  \partial(\{\Re(\lambda)\geq 0\} \cap {\partial B(0,(\sqrt{\gamma}+1/2)^2)})$ is zero for the values of $v_+$ mentioned in the Theorem. Thus, the corresponding viscous traveling wave solutions are asymptotically stable, hence, orbitally nonlinearly stable by the Theorems in \cite{MaZ3,Maz4,Z1,ZS,ZH}.
	\end{proof}	
		
		\begin{figure}[htbp]
			\begin{center}
				$
				\begin{array}{lcr}
				(a) \includegraphics[scale=0.25]{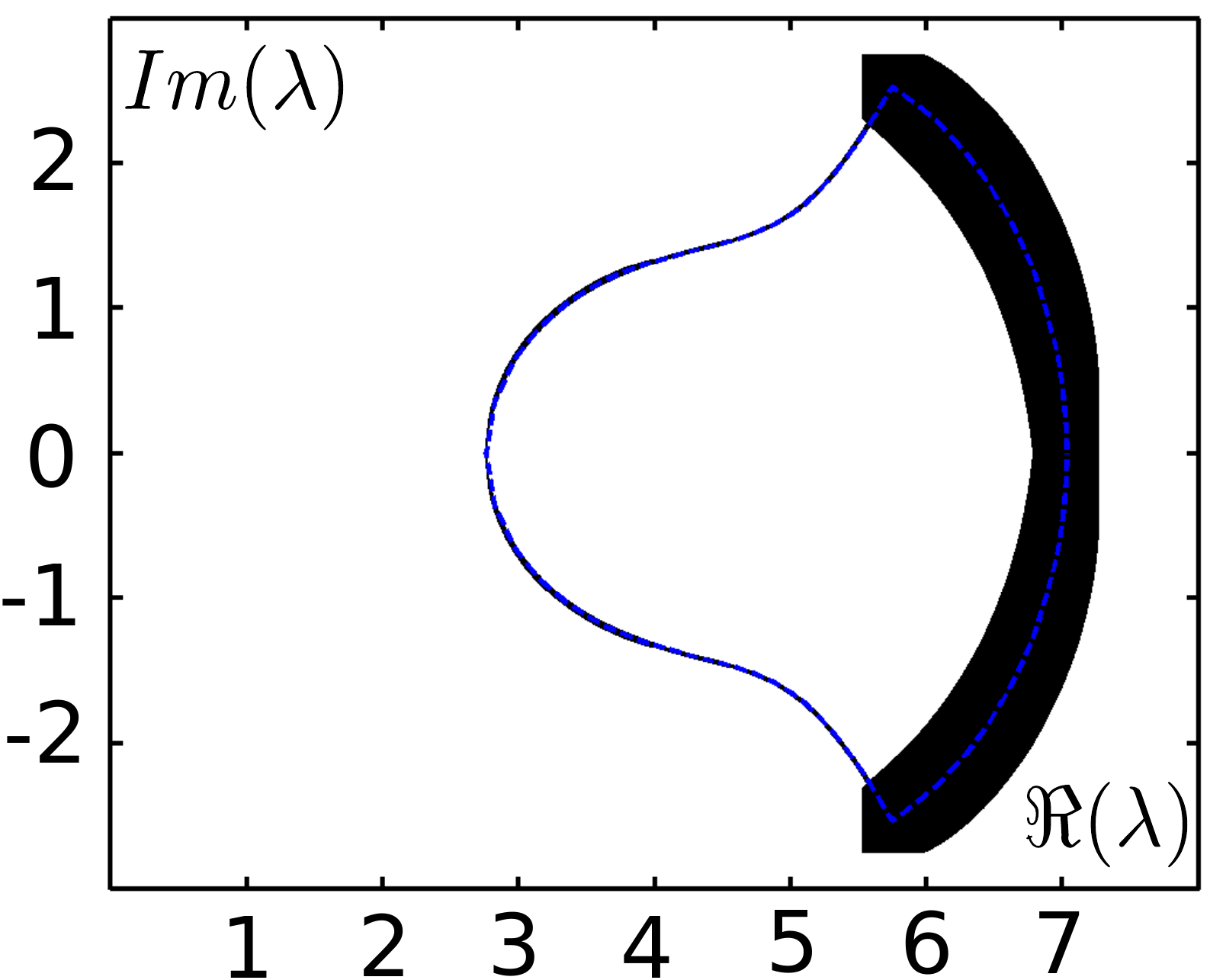}&
				(b) \includegraphics[scale=0.25]{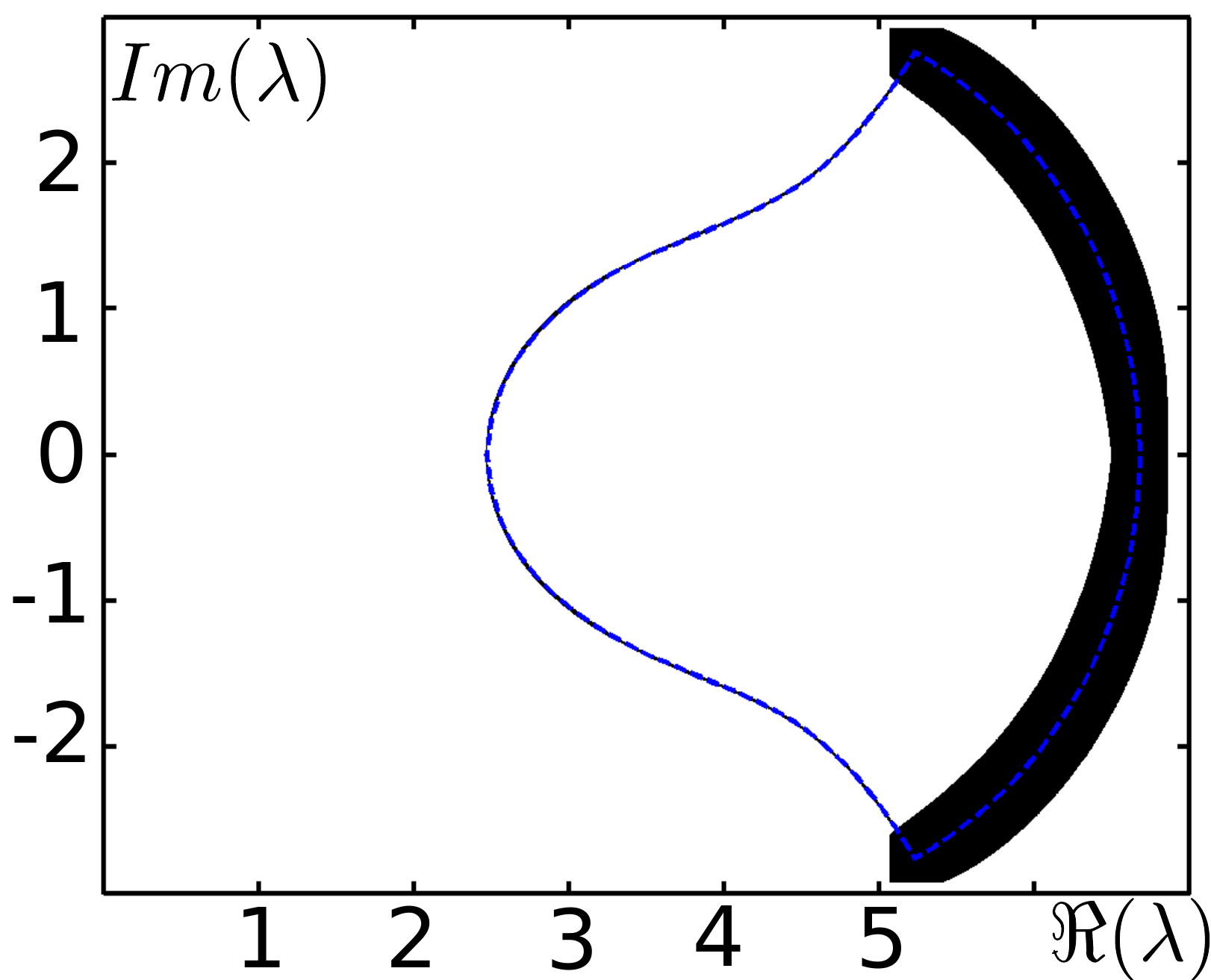}&
				(c) \includegraphics[scale=0.25]{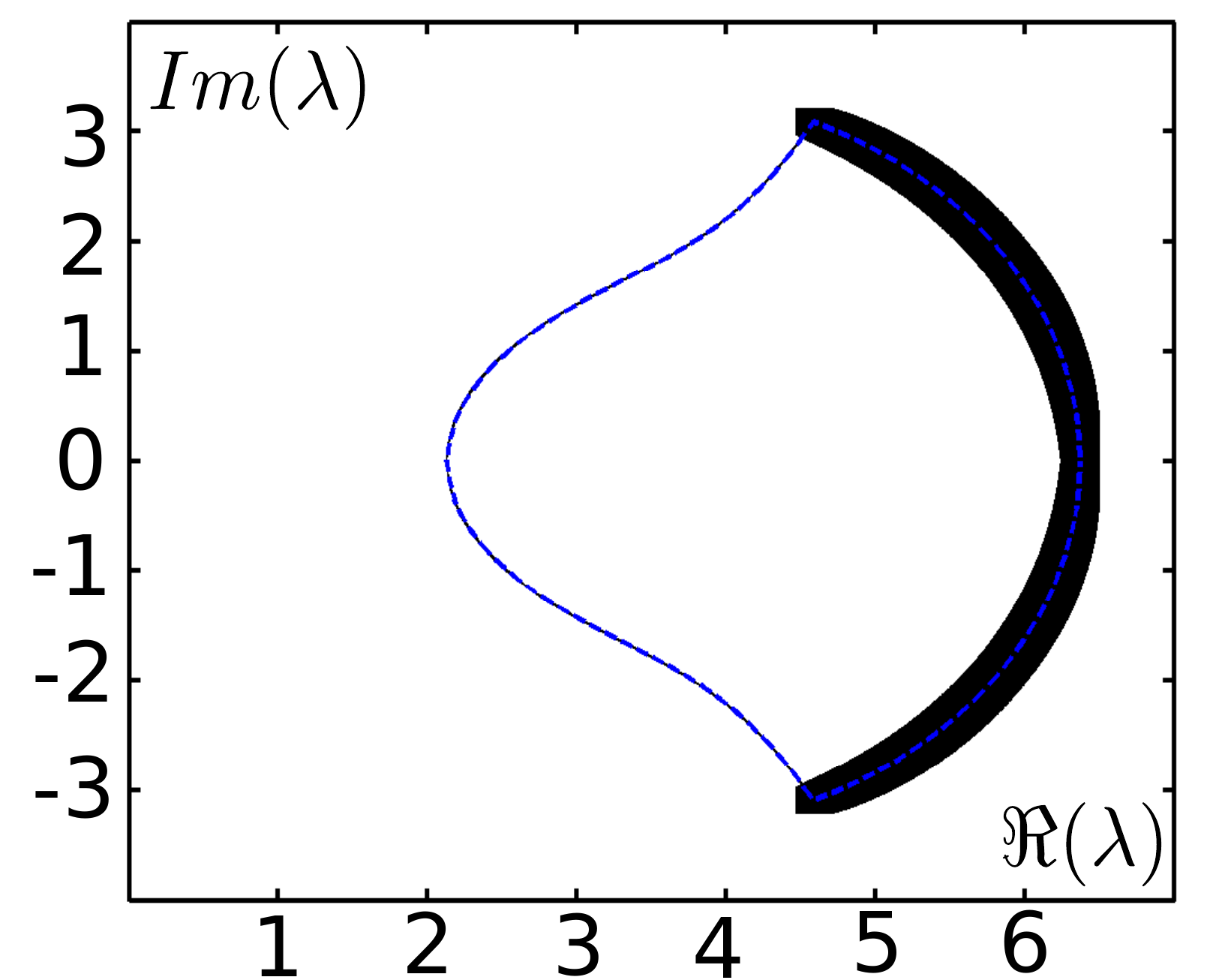}\\
				(d) \includegraphics[scale=0.25]{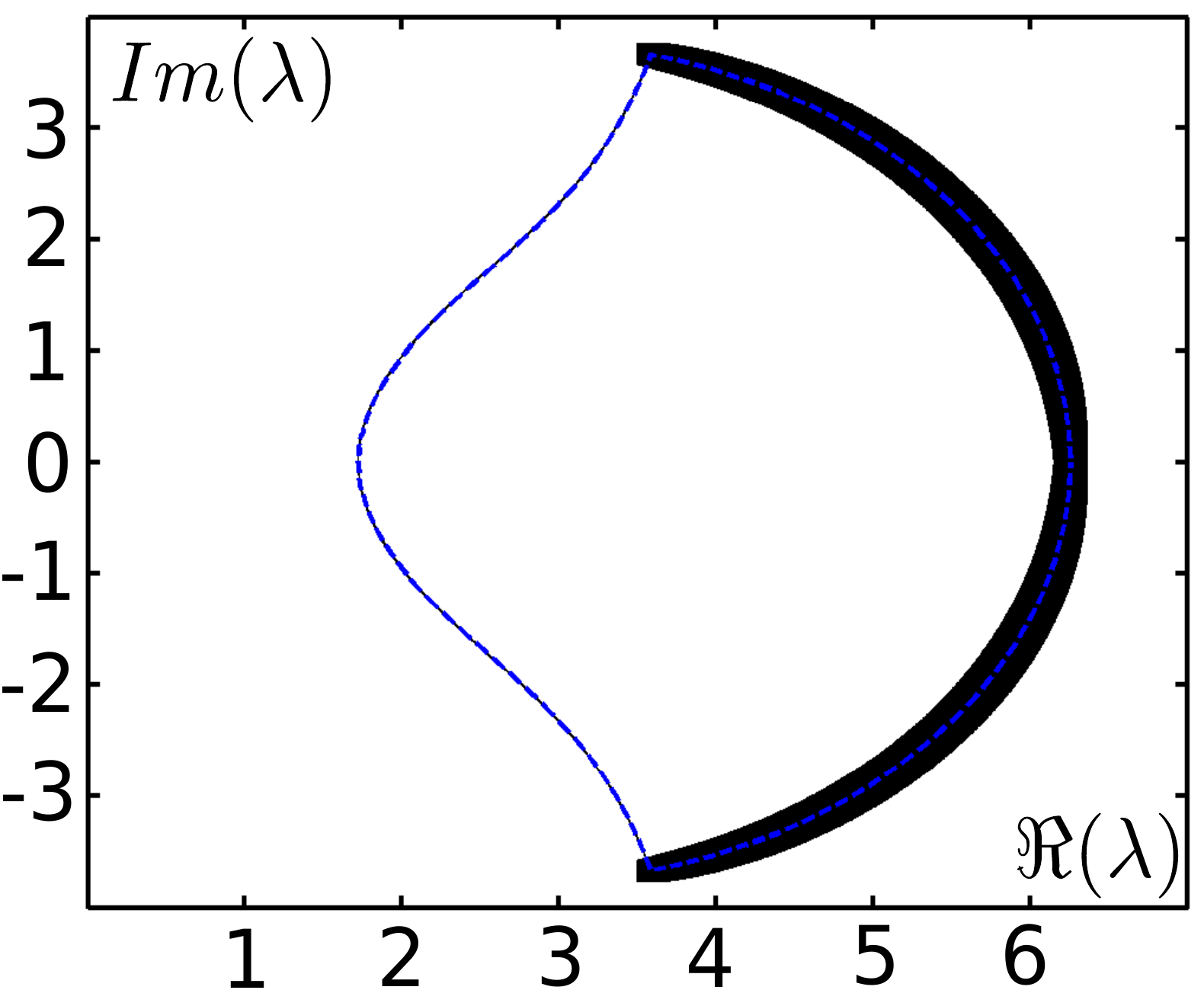}&
				(e) \includegraphics[scale=0.25]{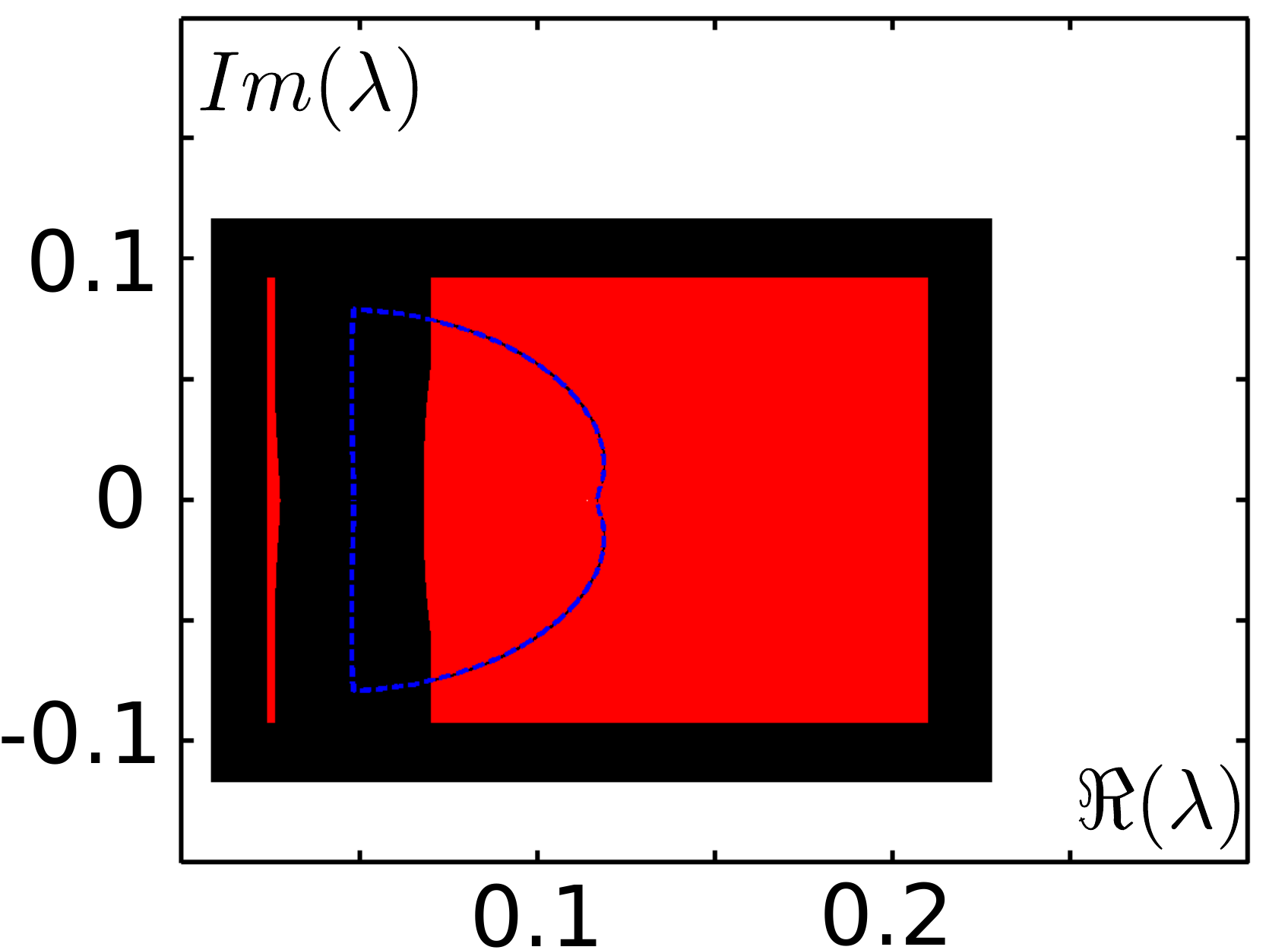}&
				(f) \includegraphics[scale=0.25]{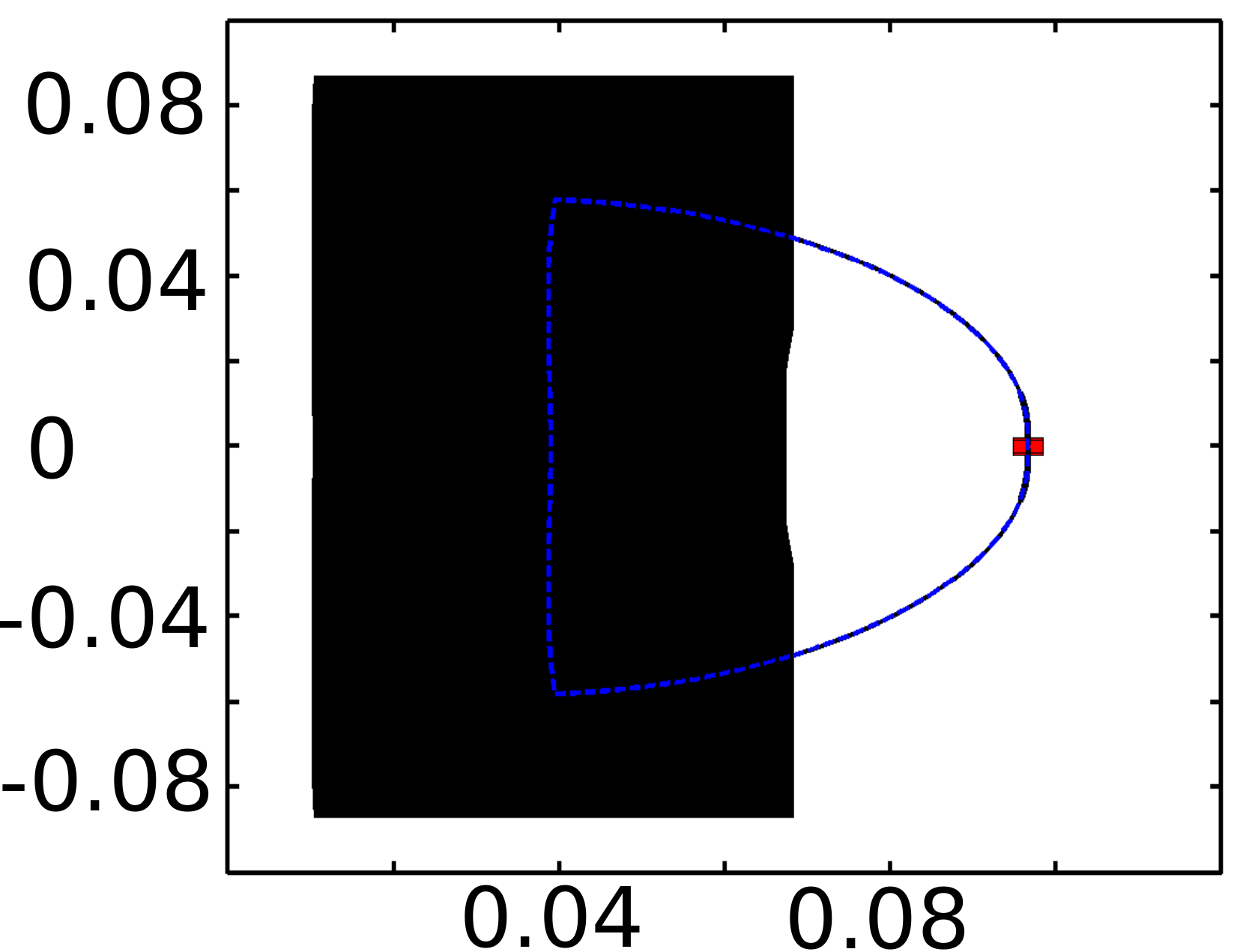}\\ &
				(g) \includegraphics[scale=0.25]{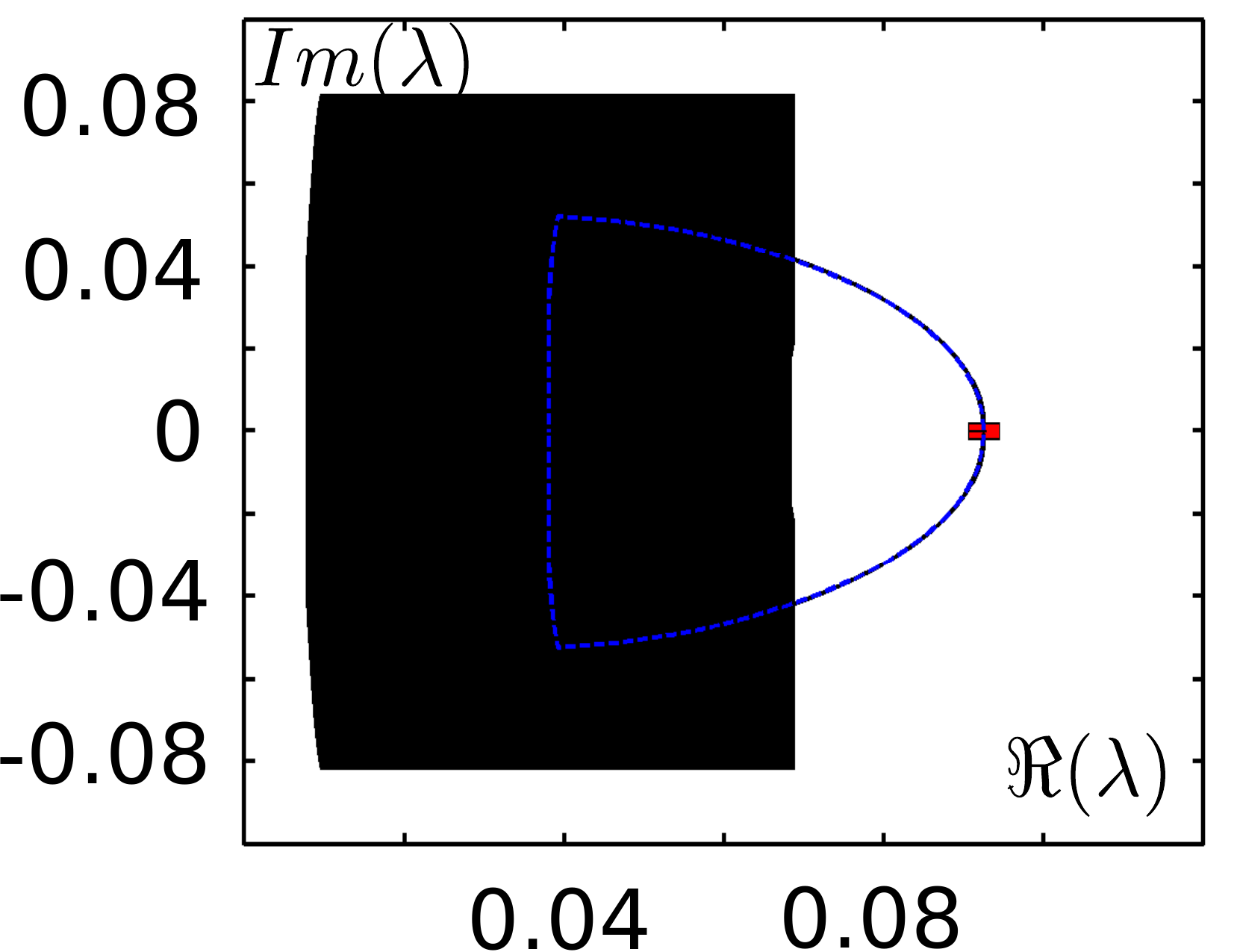}&
				\end{array}
				$
			\end{center}
			\caption{Solid black and red regions indicate an enclosure of the image of the Evans function, $D(\lambda)$, under the domain $\lambda \in S(\gamma):=  \partial(\{\Re(\lambda)\geq 0\} \cap { \overline{B(0,(\sqrt{\gamma}+1/2)^2)}})$. A blue, dotted line marks the computed value of the Evans function using double arithmetic. In each figure, the winding number of the Evans function is zero. In all figures, we take $\gamma = 5/3$ and
			(a) $v_+ = 0.4$, (b) $v_+ = 0.3$, (c) $v_+ = 0.2$, (d) $v_+ = 0.1$, (e) $v_+ = 0.01$, (f) $v_+ = 0.001$, (g) $v_+ = 0.0001$.
			}
			\label{figofevans}
		\end{figure}
		

	\subsection{Stability for nearby parameters}
	
		We note that by continuity of the Evans function ODE in $\lambda$, $v_+$, and $\gamma$, that our verification of stability at a parameter point implies stability in some neighborhood of that point in parameter space.
	
	\subsection{Summary of results}
	
	Using interval arithmetic and analytic and rigorously computed error bounds, we have shown by numerical proof that for $\gamma = 5/3$ and 
	$v_+\subset \{10^{-4},10^{-3},10^{-2},10^{-1},0.2,0.3,0.4  \}$, viscous shock solutions of \eqref{eq:pde} are spectrally stable, hence, nonlinear stable, since the Evans function evaluated on a suitable contour has winding number zero.

\section{Appendix}
	
	\subsection{ODE bounds}\label{appendix:ode_bounds}
	\bpr[\cite{Br,Co,Wi}]\label{thm:ode_bound}
		Suppose that $y'(t)=f(t,y(t))$ and $f(t,y)$ is continuous for $0\leq t<\infty$, $|y|<\infty$. In addition, suppose that $w(t,r)\geq 0$ is continuous on $0\leq t<\infty$, $0\leq r<\infty$ and that $|f(t,y)|\leq w(t,|y|)$. If in addition $y'(t)=f(t,y(t))$ and $r'(t) = w(t,r)$, $r(0) = |y(0)|$, then $y(t)$ can be continued to the right as far as $r(t)$ exists and $|y(t)|\leq r(t)$ for all such $t$.
	\epr

	\subsection{Computational Environment}
	
		All computations were carried out in STABLAB \cite{STABLAB} using MatLab 2008b and Intlab\_V6 \cite{R}. At the time of this work, known errors occurred when using Intlab with current versions of Matlab, and so the 2008 version was used for reliability. Computations were performed on a System 76 Gazelle Professional laptop with a 64-bit, 2.50 GH Intel Core i7-4710MQ processor, running Ubuntu 14.04 or 15.04. 
	
	\subsection{Computational statistics}
	
	On average, it took 18.8 minutes to solve the profile for a single value of $v_+$, and it took 2.20 hours total for all of the values of $v_+$. Obtaining initialization errors for \eqref{eq:eig_ode} for all values of $v_+$ took 24.7 minutes.  It took on average 4.61 hours to evaluate the Evans function for a single value of $v_+$ and a total of 32.3 hours for all seven values of $v_+$. The value $v_+=0.01$ 
	was particularly difficult, requiring 10.3 hours to compute the Evans function because the preimage contour had to be broken up into 74 pieces instead of the typical 39.

\bibliography{refs}{}
\bibliographystyle{plain}

\end{document}